\documentclass[12pt]{article}
\usepackage[T1]{fontenc}
\usepackage{lmodern}
\usepackage{amsmath,amssymb,amsthm}
\usepackage{array,booktabs,tabularx}
\usepackage{mathrsfs}
\usepackage{enumitem}
\usepackage{microtype}
\usepackage{xcolor}
\usepackage{needspace}
\usepackage[colorlinks=true,allcolors=blue,backref=page]{hyperref}
\hypersetup{
 citecolor=blue,
 pdftitle={Nonnegativity of the g-polynomial of split matroids},
 pdfauthor={Alice L. L. Gao and Matthew H. Y. Xie},
 pdfsubject={Combinatorics and matroid theory},
 pdfkeywords={split matroid, cyclic flat, Speyer g-polynomial}
}

\newtheorem{theorem}{Theorem}[section]
\newtheorem{proposition}[theorem]{Proposition}
\newtheorem{lemma}[theorem]{Lemma}
\newtheorem{corollary}[theorem]{Corollary}

\theoremstyle{definition}
\newtheorem{definition}[theorem]{Definition}
\theoremstyle{remark}
\newtheorem{remark}[theorem]{Remark}
\newtheorem{example}[theorem]{Example}

\DeclareMathOperator{\rk}{rk}
\newcommand{\M}{\mathsf{M}}
\newcommand{\N}{\mathsf{N}}
\newcommand{\U}{\mathsf{U}}
\newcommand{\cC}{\mathcal{C}}
\newcommand{\cD}{\mathcal{D}}
\newcommand{\cK}{\mathcal{K}}
\newcommand{\cR}{\mathcal{R}}
\newcommand{\LM}{\mathsf{LM}}
\newcommand{\La}{\mathsf{\Lambda}}
\newcommand{\Nstep}{\mathsf{N}}
\newcommand{\Estep}{\mathsf{E}}

\makeatletter
\renewcommand{\section}{%
  \@startsection{section}{1}{\z@}%
  {-3.5ex \@plus -1ex \@minus -.2ex}%
  {2.3ex \@plus .2ex}%
  {\normalfont\fontfamily{cmr}\fontseries{bx}\Large\selectfont}}
\renewcommand{\subsection}{%
  \@startsection{subsection}{2}{\z@}%
  {-3.25ex \@plus -1ex \@minus -.2ex}%
  {1.5ex \@plus .2ex}%
  {\normalfont\fontfamily{cmr}\fontseries{bx}\large\selectfont}}

\makeatother

\begin{document}

\begin{center}
{\large\bfseries
Nonnegativity of the $g$-polynomial of split matroids
}
\end{center}

\begin{center}
		Alice L. L. Gao$^1$ and
Matthew H. Y. Xie$^2$ \\[6pt]

        $^{1}$School of Mathematics and Statistics,\\
		Northwestern Polytechnical University, Xi'an, Shaanxi 710072, P.R. China\\[6pt]
		$^{2}$
        School of Mathematical Sciences, \\
        Tianjin University of Technology,
Tianjin 300384, P. R. China\\[6pt]

		Email: $^{1}$\texttt{llgao@nwpu.edu.cn},
			   $^{2}$\texttt{xie@email.tjut.edu.cn}
		\end{center}
\noindent\textbf{Abstract.}
We prove that the $g$-polynomial of every split matroid has nonnegative
coefficients, establishing Speyer's conjecture for a class closed under
taking minors and containing all paving and copaving matroids. Our proof
uses a deletion--contraction identity obtained by constructing an auxiliary
split matroid. We first show that every simple, cosimple, connected split
matroid $\M$ has an element $e$ for which both the deletion
$\M\setminus e$ and the contraction $\M/e$ are connected. More generally,
let $\M$ be any connected split matroid of rank $k$ on a ground set $E$
with $|E|\ge4$. Suppose that $e\in E$ is such that
both $\M\setminus e$ and $\M/e$ are connected. For every
$v\in E\setminus\{e\}$, we construct a connected
elementary split matroid $\N_{e,v}$ of rank $k-1$ on
$E\setminus\{e,v\}$ satisfying
\[
 g_\M(t)=g_{\M\setminus e}(t)+g_{\M/e}(t)+t\,g_{\N_{e,v}}(t).
\]
Using a fixed total order on $E$, we prescribe the proper cyclic flats
of $\N_{e,v}$ and their ranks. The identity follows from the covaluative
formula of Ferroni and Schr\"oter together with recurrences for its
correction polynomials, derived from Ferroni's enumeration of admissible
Delannoy paths for Schubert matroids. Since all three matroids on the
right have fewer elements, the identity supplies the induction step in
the proof of nonnegativity.

\noindent\emph{2020 Mathematics Subject Classification:}
Primary 05B35; Secondary 52B40, 52B45.

\noindent\emph{Keywords:}
Speyer's $g$-polynomial, split matroid, elementary split matroid,
cyclic flat, deletion--contraction.

\section{Introduction}
\label{sec:introduction}

Speyer \cite{Speyer2009} introduced the $g$-polynomial in his study of
the $K$-theory of the Grassmannian, and Fink and Speyer
\cite{FinkSpeyer2012} developed this invariant for arbitrary matroids.
For a matroid $\M$, write $g_\M(t)\in\mathbb Z[t]$ for its
$g$-polynomial. Speyer \cite[Proposition~3.3]{Speyer2009} proved that
its coefficients are nonnegative when $\M$ is realizable over a field
of characteristic zero, and conjectured that
\[
 g_{\M}(t)\in\mathbb Z_{\ge0}[t]
\]
for every matroid. Nonnegativity would imply his conjectured upper
bounds on the numbers of bounded faces of tropical linear spaces
\cite{Speyer2008}.

For a matroid $\M$ of rank $r$, Fink, Shaw, and Speyer
\cite{FinkShawSpeyer2026} denote the coefficient of $t^r$ in
$g_\M(t)$ by $\omega(\M)$. Its nonnegativity was proved by Berget
and Fink \cite[Theorem~E]{BergetFink2025}; a different proof was given
by Eur, Fink, and Larson \cite[Corollary~1.2]{EurFinkLarson2025}.
The deduction of nonnegativity of all coefficients in
\cite{BergetFink2025} uses an earlier version of the reduction in
\cite{FinkShawSpeyer2026}. The revised Theorem~1.5 of the latter
paper also requires $\M/A$ to be connected for every proper flat $A$
of $\M$. Thus nonnegativity of $\omega$ alone does not settle the
conjecture. Nonnegativity of all coefficients is known for matroids of
rank at most three \cite[Corollary~5.39]{Larson2026}, and hence also
for matroids of corank at most three by duality, but remains open in
general.

We prove the conjecture for split matroids. Introduced by Joswig and
Schr\"oter \cite{JoswigSchroter2017}, this class is closed under taking
minors and duality and contains all paving and copaving matroids.
A matroid is \emph{paving} if every circuit has cardinality at least
the rank of the matroid, and \emph{copaving} if its dual is paving. It is
\emph{sparse paving} if both it and its dual are paving.
B\'erczi et al.\ \cite[Theorem~11]{BercziEtAl2023} introduced
elementary split matroids and characterized connected split matroids
on at least two elements as those whose proper cyclic flats are
pairwise incomparable. For connected matroids on at least two
elements, the notions of split and elementary split therefore
coincide. This description is the basis of our construction.

Ferroni and Schr\"oter \cite[Theorem~9.13]{FerroniSchroter2024}
obtained a formula for covaluative invariants of connected split
matroids. For a connected split matroid $\M$ of rank $k$ on $n\ge2$
elements, it expresses $g_\M(t)$ as $g_{\U_{k,n}}(t)$ minus a sum
over the proper cyclic flats of $\M$, where $\U_{k,n}$ is the uniform
matroid of rank $k$ on $n$ elements. Each summand depends only on
$k$, $n$, and the rank and cardinality of the corresponding cyclic
flat. They also proved Speyer's conjecture for sparse paving matroids
\cite[Theorem~9.21]{FerroniSchroter2024}. Our main result is the
following theorem.

\begin{theorem}\label{thm:main}
If $\M$ is a split matroid, then
\[
 g_\M(t)\in\mathbb Z_{\ge0}[t].
\]
\end{theorem}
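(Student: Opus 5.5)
We argue by induction on $|E|$, following the strategy announced in the abstract. The first step reduces to the connected, simple, cosimple case using standard properties of the $g$-polynomial. If $\M=\M_1\oplus\M_2$, then $g_\M=g_{\M_1}g_{\M_2}$, so nonnegativity follows from the induction hypothesis, since the class of split matroids is minor-closed. If $\M$ has a loop or a coloop, then $g_\M=0$. If $\M$ has a parallel pair, $g_\M$ vanishes as well (as it does for any disconnected matroid). Since $g_{\M^*}=g_\M$, duality also disposes of series pairs. Small cases are covered by the known results: matroids of rank or corank at most three \cite{Larson2026}, and in particular all matroids with $|E|\le3$. Moreover $g_{\U_{1,1}}=t$ is nonnegative. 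So we may assume that $\M$ is a connected, simple, cosimple split matroid with $|E|\ge4$.

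Next we produce an element $e$ such that both $\M\setminus e$ and $\M/e$ are connected. To do this we use the characterisation of B\'erczi et al.: the proper cyclic flats of $\M$ are pairwise incomparable. Deleting $e$ can only disconnect $\M$ if $e$ lies in a coloop-like position relative to a cyclic flat; dually, contracting $e$ can only disconnect $\M$ if $e$ lies in a cocyclic flat. Using incomparability together with simplicity and cosimplicity, we choose $e$ avoiding the bad configurations. A counting or case argument on how the proper cyclic flats cover $E$ should rule out every element being bad.

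The core step is the identity $g_\M=g_{\M\setminus e}+g_{\M/e}+t\,g_{\N_{e,v}}$. Here $\N_{e,v}$ is an elementary split matroid of rank $k-1$ on $E\setminus\{e,v\}$, and its proper cyclic flats are prescribed from those of $\M$ via a fixed total order. We verify the identity with the Ferroni--Schr\"oter covaluative formula. The formula writes each $g$ as the uniform term minus $\sum_F c(k,n,\rk F,|F|)$ over proper cyclic flats, where the correction polynomials $c$ come from Ferroni's admissible Delannoy path counts for Schubert matroids. The uniform parts satisfy $g_{\U_{k,n}}=g_{\U_{k,n-1}}+g_{\U_{k-1,n-1}}+t\,g_{\U_{k-1,n-2}}$, a Delannoy-type recurrence. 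For the flat terms we need the analogous three-term recurrence for $c$, obtained by decomposing Delannoy paths by their first step (N, E, or diagonal). We then match each cyclic flat $F$ of $\M$ with the cyclic flats of the deletion and contraction: $F\setminus e$ or $F$ in $\M\setminus e$, and $F$ or $F\setminus e$ with rank dropped in $\M/e$. The definition of $\N_{e,v}$ is chosen precisely so that its flats absorb the diagonal-step terms. Finally, all three matroids on the right are connected split matroids on fewer elements, so induction gives nonnegativity of $g_\M$.

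The main obstacle is this third step. The identity holds only if the rank–cardinality data of the proper cyclic flats of $\M\setminus e$, $\M/e$ and $\N_{e,v}$ align exactly with the recurrence for $c$. Boundary cases need particular care: flats $F$ containing $e$, and flats that stop being cyclic or become trivial after deletion or contraction. The prescribed flats of $\N_{e,v}$ must also stay pairwise incomparable and satisfy the elementary split axioms. To handle this, I would first check the recurrence for a single Schubert-type flat (a cuspidal matroid) and then extend additively.
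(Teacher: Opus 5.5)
\textbf{Gap 1: the reduction to the simple, cosimple case rests on a false claim.} Your outline follows the paper's strategy, but a parallel pair does not force $g_\M=0$, and $g$ does not vanish on disconnected matroids. For example, $g_{\U_{1,3}}(t)=t$. A connected rank-two matroid with $m\ge3$ parallel classes has $g_\M(t)=g_{\U_{2,m}}(t)=(m-2)t+(m-3)t^2$. And $g_{\U_{1,2}\oplus\U_{1,2}}(t)=t^2$, by the multiplicativity you invoke one sentence earlier. For a direct sum of nonempty matroids only the coefficient of $t$ vanishes, since both factors have zero constant term. (Also $g_{\U_{1,1}}=0$, not $t$, because $\U_{1,1}$ is a coloop; that slip is harmless.)

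Your second step assumes simplicity and cosimplicity, so the reduction is essential; for instance, if $\{e,f\}$ is a parallel pair then $f$ is a loop of $\M/e$. The correct statement is this: for connected $\M$ of rank at least two, $\operatorname{si}(\M)$ is connected and $g_{\operatorname{si}(\M)}=g_\M$. Dually, the same holds for $\operatorname{co}(\M)$ when the corank is at least two. The paper proves it in two parts. Deleting one element $e$ of a parallel pair $\{e,f\}$ preserves connectedness. Moreover, $\M$ is the $2$-sum of $\M\setminus e$ with $\U_{1,3}$ along $f$, so $g_\M=g_{\M\setminus e}\,g_{\U_{1,3}}/t=g_{\M\setminus e}$. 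Since $\operatorname{si}(\M)$ is a smaller split minor, the induction then applies.

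\textbf{Gap 2: the two substantive steps are announced rather than proved.}

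\emph{Choosing $e$.} Saying that a counting argument ``should'' work is not a proof. The paper uses the explicit cyclic-flat presentations of the minors: $F-e$ survives in $\M\setminus e$ exactly when $b_F\ge2$, and a flat $F\not\ni e$ survives in $\M/e$ exactly when $p_F\ge2$. It combines these with the fact that a disconnected elementary split matroid has exactly two proper cyclic flats, which are complementary and have ranks summing to the rank. From this it shows that if $\M\setminus e$ is disconnected, then $\M$ has exactly two proper cyclic flats, they partition $E-e$, and every other single-element minor is connected. Duality handles a disconnected contraction.

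\emph{Defining $\N_{e,v}$.} Writing ``prescribed via a fixed total order'' leaves $\N_{e,v}$ undefined, and its definition is the heart of the proof. The recurrences force the following data.
\begin{itemize}
\item For $F\ni e$ with $b_F\ge2$: a flat of rank $r_F-1$ and size $h_F-2$. Remove $e$ together with $v$ if $v\in F$, and otherwise together with $x_F=\min(F-e)$.
\item For $F\not\ni e$ with $p_F\ge2$: a flat of rank $r_F$ and size $h_F$ avoiding $v$. Replace $v$ by $y_F=\min(E-(F\cup\{e\}))$ when $v\in F$.
\item Flats $F\ni e$ with $b_F=1$ and flats $F\not\ni e$ with $p_F=1$ are dropped. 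This matches the boundary identities $\cC_{r,k,r+1,n}=\cC_{r-1,k-1,r,n-1}$ and $\cC_{k-1,k,h,n}=\cC_{k-1,k,h,n-1}$.
\end{itemize}
You must then verify $|R_F\cap R_G|\le s_F+s_G-(k-1)$ for the new flats $R_F$ of rank $s_F$. In the case $e\in F$, $e\notin G$, $v\in G\setminus F$, this is exactly where the common order is needed: $x_F$ and $y_G$ are minima for the same order, and uncoordinated choices can violate the bound. You must also prove that $\N_{e,v}$ is connected, so that the Ferroni--Schr\"oter formula applies to it.

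\emph{The correction recurrences.} You need two recurrences, not one: one in $(r,b)$ for flats containing $e$, and one in $(p,a)$ for flats avoiding $e$. Decomposing Delannoy paths by their first step yields only the second. The first comes from the last step, or from the duality $\La_{r,k,h,n}^*\cong\La_{a,n-k,n-h,n}$.
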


The proof of Theorem~\ref{thm:main} and the auxiliary matroid construction
were obtained independently of Wang's work \cite{Wang2026}, which also
establishes the paving case of Theorem~\ref{thm:main}. By duality,
Wang's result also gives the copaving case.

To prove nonnegativity, we turn the subtraction in the covaluative
formula into a deletion--contraction identity whose terms are
$g$-polynomials of smaller split matroids. For an element $e$ of a
matroid $\M$, write $\M\setminus e$ and $\M/e$ for its deletion and
contraction. The following theorem gives the identity.

\begin{theorem}\label{thm:split-recurrence}
Let $\M$ be a connected split matroid of rank $k$ on an $n$-element
ground set $E$, where $n\ge4$. Suppose that $e\in E$ is such that
$\M\setminus e$ and $\M/e$ are both connected. For every
$v\in E\setminus\{e\}$, there exists a connected elementary split
matroid $\N_{e,v}$ of rank $k-1$ on $E\setminus\{e,v\}$ satisfying
\begin{equation}\label{eq:intro-splitting}
 g_\M(t)=g_{\M\setminus e}(t)+g_{\M/e}(t)+t\,g_{\N_{e,v}}(t).
\end{equation}
\end{theorem}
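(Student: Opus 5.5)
The plan is to prove the identity \eqref{eq:intro-splitting} by expanding all four $g$-polynomials with the covaluative formula of Ferroni and Schr\"oter \cite[Theorem~9.13]{FerroniSchroter2024}. All four matroids are connected split matroids, so the formula applies to each:
\[
 g_\M(t)=g_{\U_{k,n}}(t)-\sum_{F}\rho_{k,n}(\rk F,|F|;t),
\]
where $F$ runs over the proper cyclic flats. The correction polynomial $\rho$ is expressed via Ferroni's enumeration of admissible Delannoy paths for Schubert matroids. With this expansion the identity splits into two parts.

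The first part is a uniform part. The ranks and sizes are $(k,n)$ for $\M$, $(k,n-1)$ for $\M\setminus e$, $(k-1,n-1)$ for $\M/e$, and $(k-1,n-2)$ for $\N_{e,v}$. The required uniform identity is
\[
 g_{\U_{k,n}}=g_{\U_{k,n-1}}+g_{\U_{k-1,n-1}}+t\,g_{\U_{k-1,n-2}},
\]
which I would check from the closed form of $g$ for uniform matroids, or equivalently from a Delannoy-path recurrence on the last step: an $\Estep$ step, an $\Nstep$ step, or a diagonal step.

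The second part matches the correction terms. First I would describe the proper cyclic flats of the minors. A cyclic flat $F$ of $\M$ with $e\notin F$ stays a cyclic flat of $\M\setminus e$ with the same rank. When $e\in F$, the set $F\setminus e$ is a candidate for $\M/e$ with rank $\rk F-1$. Flats that are "stressed" by $e$ (for example when $e\notin F$ but $e\in\operatorname{cl}$ in the contraction) contribute to $\M/e$ as $F\cup e$ minus $e$ data. Using that proper cyclic flats are pairwise incomparable (B\'erczi et al.) together with the connectivity of $\M\setminus e$ and $\M/e$, I would classify each proper cyclic flat $F$ of $\M$ by whether $e\in F$. This should show that $\M\setminus e$ and $\M/e$ inherit exactly one "image" of $F$ each, or none in degenerate cases such as $|F|=\rk F+1$ or when the image is no longer proper.

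Next I would define $\N_{e,v}$. Its proper cyclic flats are the sets $F\setminus\{e,v\}$ with prescribed ranks: $\rk F-1$ if $e\in F$, and $\rk F$ if not. These are adjusted according to whether $v\in F$, and the fixed total order on $E$ is used to break ties when two images coincide or become comparable. I would then verify the axioms for elementary split matroids, namely the size and rank inequalities on pairwise intersections of the hyperplanes, so that $\N_{e,v}$ exists and is connected. Finally, for each $F$ I need the local identity
\[
 \rho_{k,n}(r,h)=\rho_{k,n-1}(\cdot)+\rho_{k-1,n-1}(\cdot)+t\,\rho_{k-1,n-2}(\cdot),
\]
with the arguments given by the images of $F$. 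This would come from the same last-step decomposition of the admissible Delannoy paths counting $\rho$, with separate cases $e\in F$ and $e\notin F$, and with boundary conventions making $\rho=0$ for a missing image.

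I expect the main obstacle to be the degenerate cases. These are flats whose image in a minor or in $\N_{e,v}$ fails to be a proper cyclic flat, or whose images collide, and in those cases the local recurrence must be corrected by boundary terms. I would first tabulate these cases (for example $e\in F$ with $|F|=\rk F+1$, or $v$ in several flats) and design the rank assignment in $\N_{e,v}$ to absorb the leftover boundary terms. Here the choice of $v$ and the total order should make the leftovers telescope.
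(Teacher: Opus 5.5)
\textbf{There is a genuine gap, in the construction of $\N_{e,v}$.} Your overall architecture is the paper's: expand all four $g$-polynomials by the Ferroni--Schr\"oter formula, use the uniform three-term recurrence, and match correction terms flat by flat. The failure is in the step that carries the theorem.

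The recurrences fix exactly what $\N_{e,v}$ must contain. For each $F$ with $e\in F$ and $b_F\ge2$ it needs a proper cyclic flat of rank $r_F-1$ and cardinality $h_F-2$. For each $F$ with $e\notin F$ and $p_F\ge2$ it needs one of rank $r_F$ and cardinality $h_F$. It must contain nothing else. Your sets $F\setminus\{e,v\}$ have cardinality $h_F-1$ in two cases: $e\in F$, $v\notin F$, and $e\notin F$, $v\in F$. There they give the wrong correction terms. They can even violate (Z2), since $|E'\setminus(F-e)|-(K-s_F)=a_F-1$, which may be $0$. So you must remove a further element $x_F\in F-e$ in the first case and insert some $y_F\in E-(F\cup\{e\})$ in the second.

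Saying the sets are ``adjusted according to whether $v\in F$'' names this need but not the choice, and the choice is the whole difficulty. Take $e\in F$, $e\notin G$, $v\notin F$, $v\in G$. Passing from $F\cap G$ to $R_F\cap R_G$ loses one element if $x_F\in G$ and gains one if $y_G\in F-\{e,x_F\}$. With independent choices it can therefore exceed $s_F+s_G-(k-1)$, breaking (Z3). The paper takes $x_F=\min(F-e)$ and $y_G=\min(E-(G\cup\{e\}))$ in a single total order. This makes $x_F\notin G$ and $y_G\in F-\{e,x_F\}$ incompatible. The role you give the order, breaking ties when images coincide or become comparable, misses this. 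Coincidence and comparability are excluded automatically once the intersection bound holds.

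\textbf{You also misplace the difficulty in the degenerate cases.} When $e\in F$ and $b_F=1$, or $e\notin F$ and $p_F=1$, the recurrences collapse to the exact identities $\cC_{r,k,r+1,n}=\cC_{r-1,k-1,r,n-1}$ and $\cC_{k-1,k,h,n}=\cC_{k-1,k,h,n-1}$. Such $F$ contribute nothing to $\N_{e,v}$. The would-be set would violate $|R_F|\ge s_F+1$ in the first case and $s_F\le K-1$ in the second. There are no leftover terms to absorb or telescope, and in fact $g_{\N_{e,v}}$ is independent of $v$ and of the order.

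Several further pieces are missing:
\begin{itemize}
\item You need the bounds $r_F\ge2$ for $e\in F$ and $a_F\ge2$ for $e\notin F$. They follow from connectedness of $\M\setminus e$ and $\M/e$, and they are exactly the hypotheses of the two recurrences.
\item A last-step decomposition only shrinks the final block $\Nstep^r\Estep^b$. For $e\notin F$, where $p$ and $a$ must decrease, you need a different argument; the paper uses $\La_{r,k,h,n}^*\cong\La_{a,n-k,n-h,n}$.
\item You need a proof that $\N_{e,v}$ is connected. In the paper, complementary $R_F,R_G$ with $s_F+s_G=k-1$ contradict \eqref{eq:H1}, contradict a cardinality count, or yield a $1$-separation of $\M$.
\end{itemize}
Finally, your remark about flats ``stressed by $e$'' is confused. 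For $e\notin F$, the flat $F$ survives in $\M/e$ with rank $r_F$ exactly when $p_F\ge2$, and $e$ cannot lie in the closure of the flat $F$.
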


For a simple, cosimple, connected split matroid, such an element
always exists. In fact, Proposition~\ref{prop:connected-minors} shows
that among the $2n$ deletions and contractions $\M\setminus e$ and
$\M/e$, with $e\in E$, at most one is disconnected. We construct $\N_{e,v}$ in
Section~\ref{sec:residual} by specifying its cyclic flats and their
ranks. The construction uses an arbitrary total order $\prec$ on $E$;
we write $\N_{e,v,\prec}$ when this dependence is relevant.
For fixed $\M$ and $e$, its $g$-polynomial is independent of both
$v$ and $\prec$.

The recurrences for the correction polynomials distinguish whether a
proper cyclic flat contains $e$. They follow from Ferroni's
enumeration of admissible Delannoy paths \cite{Ferroni2023} and
prescribe the ranks and cardinalities of the proper cyclic flats of
$\N_{e,v}$. We realize these data by modifying the cyclic flats of
$\M$. A common total order coordinates the elements removed from or
inserted into these sets and ensures the required intersection bounds.
After proving that the resulting matroid is connected, we combine
the correction recurrences with the corresponding identity for uniform
matroids to obtain \eqref{eq:intro-splitting}.

All three matroids on the right of \eqref{eq:intro-splitting} are
connected split matroids with fewer elements than $\M$. We use
matroids of rank or corank at most two as the initial cases for an
induction on the size of the ground set. Simplification and
cosimplification handle connected matroids that are not simple or
cosimple, while multiplicativity under direct sums handles
disconnected matroids. This proof does not use the $\omega$-invariant
or the reduction in \cite{FinkShawSpeyer2026}.

When $\M$ is paving of rank at least three and both minors at $e$
are connected, \eqref{eq:intro-splitting} agrees with the recurrence
obtained independently by Wang \cite[Proposition~2.5]{Wang2026}. Both arguments use the formula
of Ferroni and Schr\"oter. Wang \cite[Lemma~2.6]{Wang2026} proves
nonnegativity of the remaining polynomial by an inequality for
coefficients. Our construction identifies that polynomial with
$g_{\N_{e,v}}(t)$ and applies also when the proper cyclic flats have
different ranks. Section~\ref{subsec:paving} gives the specialization
and shows that $\N_{e,v}$ is paving when $\M$ is paving.

We begin in Section~\ref{sec:preliminaries} with the cyclic flat
criterion and the properties of the $g$-polynomial needed in the proof.
The recurrences developed in Section~\ref{sec:corrections} determine the
ranks and cardinalities required for the auxiliary matroid.
To realize these data, we first examine deletion and contraction in
Section~\ref{sec:minors}, then give the construction in
Section~\ref{sec:residual}. The two main theorems follow in
Section~\ref{sec:splitting}, which concludes with the specialization
to paving matroids.

\section{Split matroids and Speyer's \texorpdfstring{$g$}{g}-polynomial}
\label{sec:preliminaries}

Throughout this paper, all matroids are finite, and connected matroids
are required to have nonempty ground sets. We follow Oxley
\cite{Oxley2011} for general matroid terminology.

\subsection{Cyclic flats of elementary split matroids}

We follow Ferroni and Schr\"oter
\cite[Sections~2--4]{FerroniSchroter2024} for terminology and notation concerning
cyclic flats and split matroids.
Let $\M$ be a matroid with ground set $E=E(\M)$
and rank function $\rk_{\M}$.
We write $\rk(\M)=\rk_{\M}(E)$ for the rank of $\M$.
A subset $A\subseteq E$ is \emph{cyclic} if it is a union of circuits,
or equivalently, if the restriction $\M|_A$ of $\M$ to $A$ has no
coloops. A \emph{cyclic flat} is a flat that is cyclic. The cyclic
flats of $\M$ form a lattice under inclusion, which we denote by
$\mathcal Z(\M)$. If $\M$ is loopless and coloopless, the least and
greatest elements of $\mathcal Z(\M)$ are $\varnothing$ and $E$,
respectively. In this case, we call the remaining cyclic flats
\emph{proper cyclic flats} and write
$\mathcal Z^{\circ}(\M)=\mathcal Z(\M)\setminus\{\varnothing,E\}$.

We use the following cyclic flat criterion in the form given by Bonin
and de Mier \cite[Theorem~3.2]{BoninDeMier2008}; the notation follows
Ferroni and Schr\"oter \cite[Theorem~2.3]{FerroniSchroter2024}.
Bonin and de Mier also discuss its antecedent in work of Sims at the
end of their Section~3.

\begin{theorem}[Bonin and de Mier {\cite[Theorem~3.2]{BoninDeMier2008}}]
\label{thm:cyclic-flat-axioms}
Let $E$ be a finite set, let $\mathcal Z$ be a collection of subsets of
$E$, and let $\rk:\mathcal Z\to\mathbb Z_{\geq0}$. There exists a matroid
$\M$ on $E$ with $\mathcal Z(\M)=\mathcal Z$ and
$\rk_{\M}(Z)=\rk(Z)$ for every $Z\in\mathcal Z$ if and only if the
following conditions hold:
\begin{itemize}
\item[{\rm (Z0)}] $\mathcal Z$ is a lattice under inclusion.

\item[{\rm (Z1)}] $\rk(0_{\mathcal Z})=0$, where $0_{\mathcal Z}$ denotes
the least element of $\mathcal Z$.

\item[{\rm (Z2)}] If $X,Y\in\mathcal Z$ satisfy $X\subsetneq Y$, then
$0<\rk(Y)-\rk(X)<|Y\setminus X|$.

\item[{\rm (Z3)}] For all $X,Y\in\mathcal Z$,
\[
\rk(X)+\rk(Y)
\geq
\rk(X\vee_{\mathcal Z}Y)
+
\rk(X\wedge_{\mathcal Z}Y)
+
\bigl|(X\cap Y)\setminus(X\wedge_{\mathcal Z}Y)\bigr|.
\]
\end{itemize}
Here $X\vee_{\mathcal Z}Y$ and $X\wedge_{\mathcal Z}Y$ denote the join
and meet of $X$ and $Y$ in the lattice $\mathcal Z$, respectively.
\end{theorem}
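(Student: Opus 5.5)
We prove both directions separately. The necessity part is routine: for a matroid $\M$, the cyclic flats form a lattice with join $X\vee Y=\operatorname{cl}(X\cup Y)$ and meet $X\wedge Y$ equal to the union of circuits contained in $X\cap Y$. This gives (Z0). The least cyclic flat is the closure of the empty set, namely the set of loops, which has rank $0$; this gives (Z1). For (Z2), if $X\subsetneq Y$ are cyclic flats, then $\rk(Y)>\rk(X)$ because $X$ is a flat. Moreover, $Y$ is cyclic, so the contraction $(\M|_Y)/X$ has no coloops, which forces $\rk(Y)-\rk(X)<|Y\setminus X|$. For (Z3), we apply submodularity, $\rk(X)+\rk(Y)\ge \rk(X\cup Y)+\rk(X\cap Y)$, and then use $\rk(X\cup Y)=\rk(X\vee Y)$. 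The remaining point is $\rk(X\cap Y)=\rk(X\wedge Y)+|(X\cap Y)\setminus(X\wedge Y)|$. This holds because every element of $X\cap Y$ outside the union of circuits inside $X\cap Y$ is a coloop of $\M|_{X\cap Y}$.

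For sufficiency, given $(\mathcal Z,\rk)$, we define the candidate rank function on all subsets:
\[
 r(A)=\min_{Z\in\mathcal Z}\bigl(\rk(Z)+|A\setminus Z|\bigr).
\]
We then check the rank axioms. Normalization $0\le r(A)\le|A|$ comes from taking $Z=0_{\mathcal Z}$ and using (Z1). Monotonicity and the unit-increase property are immediate from the formula. The substantive step is submodularity. Choose minimizers $Z_A$ for $A$ and $Z_B$ for $B$, and bound $r(A\cup B)$ using $Z_A\vee Z_B$ and $r(A\cap B)$ using $Z_A\wedge Z_B$. The defect created by replacing $Z_A\cap Z_B$ with the meet $Z_A\wedge Z_B$ costs exactly $|(Z_A\cap Z_B)\setminus(Z_A\wedge Z_B)|$ extra elements. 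Inequality (Z3) is tailored to absorb precisely this cost, while the union $Z_A\cup Z_B\subseteq Z_A\vee Z_B$ only helps. Carefully counting $|(A\cup B)\setminus(Z_A\vee Z_B)|+|(A\cap B)\setminus(Z_A\wedge Z_B)|\le |A\setminus Z_A|+|B\setminus Z_B|+|(Z_A\cap Z_B)\setminus (Z_A\wedge Z_B)|$ gives submodularity.

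Finally, we must identify the cyclic flats of the resulting matroid with $\mathcal Z$ and their ranks with $\rk$. First we show $r(Z)=\rk(Z)$ for $Z\in\mathcal Z$. Any competitor $W$ gives $\rk(W)+|Z\setminus W|\ge\rk(Z)$, by comparing through $Z\wedge W$ and $Z\vee W$ with (Z2) and (Z3). Next, each $Z\in\mathcal Z$ is a flat, since adding an element $x\notin Z$ increases $r$: any minimizer for $Z\cup x$ not containing $x$ would contradict the strict inequality in (Z2) or (Z3). Each $Z$ is also cyclic, since removing $x\in Z$ leaves the rank unchanged by the strict upper inequality $\rk(Y)-\rk(X)<|Y\setminus X|$. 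Conversely, for an arbitrary cyclic flat $F$, take a minimizer $Z$ of $r(F)$ with $Z$ chosen inclusion-minimal or maximal. We then argue $F=Z$: the elements of $F\setminus Z$ would be coloops of $\M|_F$, and elements of $Z\setminus F$ contradict closedness.

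I expect this identification step, together with the meet-defect bookkeeping in submodularity, to be the main obstacle. I would follow Bonin and de Mier's argument closely there.
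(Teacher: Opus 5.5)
The paper gives no proof of this theorem. It is quoted from Bonin and de Mier and used as a black box: the necessity direction yields \eqref{eq:positive-parameters} and \eqref{eq:H1}, and the sufficiency direction, together with their independent-set description, is used in Proposition~\ref{prop:cyclic-flat-description}. Your proposal is therefore a self-contained reconstruction. It follows the standard rank-function route and is correct, apart from one mis-stated step discussed below.

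The count you need for submodularity does hold. After applying (Z3), use $Z_A\cup Z_B\subseteq Z_A\vee Z_B$ and $|(A\cap B)\setminus(Z_A\wedge Z_B)|\le|(A\cap B)\setminus(Z_A\cap Z_B)|+|(Z_A\cap Z_B)\setminus(Z_A\wedge Z_B)|$. This reduces the claim to $|(A\cup B)\setminus(X\cup Y)|+|(A\cap B)\setminus(X\cap Y)|\le|A\setminus X|+|B\setminus Y|$, which is an elementwise four-case check. Your route also gives something the paper imports separately. From $r(A)=\min_{Z}(\rk(Z)+|A\setminus Z|)$, a set $X$ satisfies $r(X)=|X|$ if and only if $|X\cap Z|\le\rk(Z)$ for all $Z\in\mathcal Z$. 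This is exactly the independence criterion used in Proposition~\ref{prop:cyclic-flat-description}.

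The mis-stated step is the flatness argument; your sentence covers the wrong case. A competitor $W$ with $x\notin W$ needs no strict inequality: its value at $Z\cup\{x\}$ is $\rk(W)+|Z\setminus W|+1\ge\rk(Z)+1$ directly by your first step. The case that needs an argument is $x\in W$, and you do not address it.

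\begin{itemize}
\item There $Z\subsetneq Z\vee W$, so the strict lower bound in (Z2) gives $\rk(Z\vee W)\ge\rk(Z)+1$.
\item Substituting this into (Z3) gives $\rk(Z\wedge W)+|(Z\cap W)\setminus(Z\wedge W)|\le\rk(W)-1$.
\item Non-strict (Z2) for $Z\wedge W\subseteq Z$ gives $\rk(Z)\le\rk(Z\wedge W)+|Z\setminus W|+|(Z\cap W)\setminus(Z\wedge W)|$.
\item Combining the last two bounds yields $\rk(W)+|Z\setminus W|\ge\rk(Z)+1$, as required.
\end{itemize}

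Note that (Z3) is never strict, so the extra unit always comes from (Z2). Its lower half gives flatness. Its upper half, applied to $Z\wedge W\subsetneq Z$ when $x\in Z\setminus W$, gives cyclicity, and you state that case correctly.

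In the final identification step, no extremal choice of minimizer is needed. Take any minimizer $Z$ of $r(F)$. Each $y\in F\setminus Z$ satisfies $r(F-y)\le\rk(Z)+|F\setminus Z|-1=r(F)-1$, so cyclicity of $F$ forces $F\subseteq Z$. Then $r(F)=\rk(Z)=r(Z)$, and since $F$ is closed, $Z=F$.
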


B\'erczi et al.\ \cite[Theorem~11]{BercziEtAl2023} proved that a
loopless, coloopless matroid is elementary split if and only if its
proper cyclic flats are pairwise incomparable. For connected matroids
on at least two elements, this is also equivalent to being split.
Distinct proper cyclic flats therefore have meet $\varnothing$ and join
$E$ in the lattice of cyclic flats. Specializing
Theorem~\ref{thm:cyclic-flat-axioms} to this lattice and applying the
description of independent sets due to Bonin and de Mier
\cite[Lemma~3.1(i)]{BoninDeMier2008} gives the following standard
construction. It is also described in B\'erczi et al.\
\cite[Remark~7]{BercziEtAl2023}. We include the verification to fix
the precise inequalities used later.

\begin{proposition}
\label{prop:cyclic-flat-description}
Let $S$ be a finite set and let $k,m$ be integers with
$1\leq k<|S|$ and $m\geq0$. Suppose that $F_1,\ldots,F_m$ are distinct
subsets of $S$ and that $r_1,\ldots,r_m$ are integers satisfying
\begin{align}
1\leq r_i&\leq k-1,
&& \text{for }1\leq i\leq m, \label{eq:rank-condition}\\
|F_i|&\geq r_i+1,
&& \text{for }1\leq i\leq m, \label{eq:size-condition}\\
|S\setminus F_i|&\geq k-r_i+1,
&& \text{for }1\leq i\leq m, \label{eq:complement-condition}\\
|F_i\cap F_j|&\leq r_i+r_j-k,
&& \text{for }1\leq i<j\leq m. \label{eq:intersection-condition}
\end{align}
Then there is a loopless, coloopless elementary split matroid $\M$ of
rank $k$ on $S$ whose proper cyclic flats are exactly $F_1,\ldots,F_m$,
with $\rk_{\M}(F_i)=r_i$ for each $i$. Its independent sets are
\begin{equation}
\label{eq:independent-sets}
\left\{
X\subseteq S:
|X|\leq k
\text{ and }
|X\cap F_i|\leq r_i
\text{ for }1\leq i\leq m
\right\}.
\end{equation}
When $m=0$, this matroid is $\U_{k,|S|}$.
\end{proposition}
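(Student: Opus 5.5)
We apply Theorem~\ref{thm:cyclic-flat-axioms} to the collection $\mathcal Z=\{\varnothing,S,F_1,\ldots,F_m\}$ with $\rk(\varnothing)=0$, $\rk(S)=k$ and $\rk(F_i)=r_i$. We then identify the independent sets through the Bonin--de Mier description \cite[Lemma~3.1(i)]{BoninDeMier2008}.

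\textbf{The lattice conditions.} First we check that the $F_i$ are distinct from $\varnothing$ and $S$ and are pairwise incomparable. By \eqref{eq:size-condition}, $|F_i|\ge r_i+1\ge2$, so $F_i\neq\varnothing$. By \eqref{eq:complement-condition}, $|S\setminus F_i|\ge k-r_i+1\ge2$, so $F_i\neq S$.

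For incomparability, suppose $F_i\subseteq F_j$ with $i\ne j$. Then \eqref{eq:intersection-condition} gives $|F_i|=|F_i\cap F_j|\le r_i+r_j-k\le r_i-1$, because $r_j\le k-1$. This contradicts \eqref{eq:size-condition}.

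Hence $\mathcal Z$ is a lattice in which any two distinct $F_i,F_j$ have meet $\varnothing$ and join $S$. This gives (Z0). Condition (Z1) holds by definition.

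For (Z2), the only strict inclusions are $\varnothing\subsetneq F_i\subsetneq S$ and $\varnothing\subsetneq S$.
\begin{itemize}
\item For $\varnothing\subsetneq F_i$ we need $0<r_i<|F_i|$. This follows from \eqref{eq:rank-condition} and \eqref{eq:size-condition}.
\item For $F_i\subsetneq S$ we need $0<k-r_i<|S\setminus F_i|$. This follows from \eqref{eq:rank-condition} and \eqref{eq:complement-condition}.
\item For $\varnothing\subsetneq S$ we need $0<k<|S|$, which is a hypothesis.
\end{itemize}

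For (Z3), comparable pairs give equality. This is immediate since meet and join are then the pair itself, and the correction term vanishes. For $F_i,F_j$ distinct, the condition reads $r_i+r_j\ge k+0+|F_i\cap F_j|$, which is exactly \eqref{eq:intersection-condition}.

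So Theorem~\ref{thm:cyclic-flat-axioms} yields a matroid $\M$ on $S$ with $\mathcal Z(\M)=\mathcal Z$ and the prescribed ranks. It is loopless because $\varnothing$ is the least cyclic flat, which equals the closure of $\varnothing$. It is coloopless because $S$ is cyclic. Its proper cyclic flats are the pairwise incomparable $F_i$, so $\M$ is elementary split by the B\'erczi et al.\ characterization quoted above.

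\textbf{Independent sets.} Bonin--de Mier's lemma states that $X$ is independent iff $|X\cap Z|\le\rk(Z)$ for every cyclic flat $Z$. Taking $Z\in\{\varnothing,S,F_1,\ldots,F_m\}$, the condition for $Z=\varnothing$ is vacuous and the condition for $Z=S$ is $|X|\le k$. This gives exactly \eqref{eq:independent-sets}.

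When $m=0$, the set \eqref{eq:independent-sets} consists of all subsets of size at most $k$, so $\M=\U_{k,|S|}$.

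The only step requiring care is the incomparability argument, since (Z3) for the pairs $F_i,F_j$ relies on their meet being $\varnothing$ and their join being $S$. That in turn needs the strict inequality from $r_j\le k-1$ combined with \eqref{eq:size-condition}. The remaining verifications are direct substitutions.
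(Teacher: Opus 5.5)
Your proof is correct and follows the paper's argument essentially step for step. You verify (Z0)--(Z3) for $\{\varnothing,F_1,\ldots,F_m,S\}$, obtain incomparability from the intersection bound together with $r_j\le k-1$ and the size condition, and read off the independent sets from Bonin and de Mier's Lemma~3.1(i); the only cosmetic difference is that you assume $F_i\subseteq F_j$ and derive a contradiction, whereas the paper shows directly that $|F_i\cap F_j|\le\min\{r_i,r_j\}-1<\min\{|F_i|,|F_j|\}$.
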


\begin{proof}
We verify the axioms of Theorem~\ref{thm:cyclic-flat-axioms} for
$\mathcal Z=\{\varnothing,F_1,\ldots,F_m,S\}$.
Conditions \eqref{eq:rank-condition}--\eqref{eq:complement-condition}
ensure that each $F_i$ is a nonempty proper subset of $S$. For $i\ne j$,
the intersection and rank bounds give
\[
|F_i\cap F_j|
\leq r_i+r_j-k
\leq\min\{r_i,r_j\}-1
<\min\{|F_i|,|F_j|\}.
\]
Thus the $F_i$ are pairwise incomparable. It follows that $\mathcal Z$
is a lattice with least element $\varnothing$ and greatest element $S$.
For $i\ne j$, we have $F_i\wedge F_j=\varnothing$ and $F_i\vee F_j=S$.
This proves {\rm (Z0)}.
Define $\rk(\varnothing)=0$, $\rk(S)=k$, and $\rk(F_i)=r_i$.
Then {\rm (Z1)} holds by definition. The inequalities
$0<r_i<|F_i|$, $0<k-r_i<|S\setminus F_i|$, and $0<k<|S|$
verify {\rm (Z2)} for all strict inclusions in $\mathcal Z$.
For {\rm (Z3)}, comparable pairs give equality, while for distinct
$F_i,F_j$ the required inequality is
$r_i+r_j\ge k+|F_i\cap F_j|$, which is
\eqref{eq:intersection-condition}.

Theorem~\ref{thm:cyclic-flat-axioms} now gives a matroid $\M$ of rank
$k$ on $S$ with the prescribed cyclic flats and ranks. Since its least
and greatest cyclic flats are $\varnothing$ and $S$, it is loopless and
coloopless. Since its proper cyclic flats are pairwise incomparable,
it is elementary split.
By Bonin and de Mier \cite[Lemma~3.1(i)]{BoninDeMier2008}, a set
$X\subseteq S$ is independent in $\M$ if and only if
$|X\cap Z|\leq\rk_{\M}(Z)$ for every $Z\in\mathcal Z$.
The condition for $Z=\varnothing$ is automatic. Those for $Z=S$ and
$Z=F_i$ are precisely the inequalities in \eqref{eq:independent-sets}.
When $m=0$, only the condition $|X|\leq k$ remains, so
$\M=\U_{k,|S|}$.
\end{proof}

We introduce notation for the ranks and cardinalities of cyclic flats
and record the inequalities needed for the constructions below.
Let $\M$ be a loopless, coloopless elementary split matroid of rank $k$
on an $n$-element set $E$. For $F\in\mathcal Z^\circ(\M)$, write
\[
 r_F=\rk_{\M}(F),\qquad h_F=|F|,\qquad
 p_F=k-r_F,\qquad b_F=h_F-r_F,\qquad a_F=n-k-h_F+r_F.
\]
Here $b_F$ is the nullity of $F$, $p_F$ is the rank of the contraction
$\M/F$, and $a_F$ is the rank of $E\setminus F$ in the dual matroid
$\M^*$. Axiom {\rm (Z2)} gives
\begin{equation}\label{eq:positive-parameters}
 r_F,p_F,b_F,a_F\ge1.
\end{equation}
For distinct $F,G\in\mathcal Z^\circ(\M)$, axiom {\rm (Z3)} gives
\begin{equation}\label{eq:H1}
 |F\cap G|\le r_F+r_G-k.
\end{equation}
If $\M$ is simple and cosimple, then
\begin{equation}\label{eq:simple-cosimple-bounds}
 r_F\ge2,\qquad a_F\ge2.
\end{equation}
Indeed, if $r_F=1$, then $F$ contains a parallel pair, contrary to
simplicity. Since $E\setminus F$ is a cyclic flat of $\M^*$ of rank
$a_F$, the same argument applied to $\M^*$ gives $a_F\ge2$.

\subsection{Speyer's \texorpdfstring{$g$}{g}-polynomial}

Following Ferroni \cite[Definition~1.2]{Ferroni2023}, we define
Speyer's $g$-polynomial by the following characterization; see also
Ferroni and Schr\"oter \cite[Theorem~9.15]{FerroniSchroter2024}.
Here a connected \emph{series-parallel matroid} on
at least two elements is a matroid obtained from $\U_{1,2}$ by
successive series and parallel extensions. A parallel extension adds
a new element parallel to an existing nonloop element; a series
extension is the dual operation.

\Needspace{8\baselineskip}
\begin{definition}
Speyer's $g$-polynomial is
the unique matroid invariant $\M\mapsto g_\M(t)\in\mathbb Z[t]$
satisfying the following properties:
\begin{enumerate}
\renewcommand{\labelenumi}{(\roman{enumi})}
\item If $\M$ has a loop or a coloop, then $g_\M(t)=0$.
\item If $\M$ is a connected series-parallel matroid on at least two
elements, then $g_\M(t)=t$.
\item For a direct sum $\M=\M_1\oplus\M_2$, we have
$g_\M(t)=g_{\M_1}(t)g_{\M_2}(t)$.
\item The map $\M\mapsto g_\M(t)$ is covaluative with respect to matroid
polytope subdivisions.
\end{enumerate}
\end{definition}

The \emph{base polytope} $P(\M)$ is the convex hull
in $\mathbb R^{E(\M)}$ of the indicator vectors of the bases of $\M$.
For a subdivision $\mathcal S$ of the base polytope $P(\M)$ of $\M$
into matroid base polytopes, let $\mathcal S^{\circ}$ denote the
collection of cells not contained in the relative boundary of $P(\M)$,
as in Ferroni \cite[Section~2.3]{Ferroni2023}. Property (iv) means that
\[
 g_\M(t)=
 \sum_{P(\N)\in\mathcal S^{\circ}}g_\N(t).
\]
The sum runs over all cells in $\mathcal S^{\circ}$, including those of
lower dimension.
For the general theory of valuative invariants of matroids and
polymatroids, we refer to Derksen and Fink \cite{DerksenFink2010}.
Ardila and Sanchez \cite{ArdilaSanchez2023} develop a framework for
constructing valuative invariants using the Hopf monoid of generalized
permutahedra.

We recall the $2$-sum operation, following Oxley
\cite{Oxley2011}. Let $\M_1$ and $\M_2$ have ground sets $E_1$ and
$E_2$ with $E_1\cap E_2=\{p\}$ and $|E_i|\ge3$ for $i=1,2$,
where $p$ is neither a loop nor a coloop in either matroid.
Their \emph{$2$-sum} $\M_1\oplus_2\M_2$ is the matroid on
$(E_1\cup E_2)\setminus\{p\}$ whose circuits are the circuits of
either $\M_i$ that avoid $p$, together with the sets
$(C_1\cup C_2)\setminus\{p\}$, where $C_i$ is a circuit of
$\M_i$ containing $p$ for $i=1,2$.
We recall three standard properties of the $g$-polynomial.
The $2$-sum formula and duality were established in the work of Speyer
\cite{Speyer2009} and extended to arbitrary matroids by Fink and Speyer
\cite[Section~9]{FinkSpeyer2012}. The $2$-sum formula is also recorded
by Panzer \cite[Section~4.7]{Panzer2025}, and duality by Ferroni and
Schr\"oter \cite[Proposition~9.17(b)]{FerroniSchroter2024}.
For the constant term and the degree bound in part~(iii), see also
Larson \cite[Definition~5.19 and Corollary~5.22]{Larson2026}.
The normalization on the empty matroid follows from multiplicativity
and $g_{\U_{1,2}}(t)=t$.

\begin{proposition}\label{prop:g-properties}
Let $\M$, $\M_1$, and $\M_2$ be matroids.
\begin{enumerate}
\renewcommand{\labelenumi}{(\roman{enumi})}
\item If the $2$-sum $\M_1\oplus_2\M_2$ is defined, then
\[
 g_{\M_1\oplus_2 \M_2}(t)
 =\frac{g_{\M_1}(t)g_{\M_2}(t)}{t}.
\]
\item The polynomial is invariant under duality. That is,
$g_{\M^*}(t)=g_\M(t)$.
\item The empty matroid has $g$-polynomial $1$. If $\M$ is nonempty,
then $g_\M(0)=0$ and $\deg g_\M\le\min\{k,n-k\}$, where
$k=\rk(\M)$ and $n=|E(\M)|$.
The quantity $n-k$ is the \emph{corank} of $\M$.
\end{enumerate}
\end{proposition}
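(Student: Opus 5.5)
This proposition collects standard facts, so the plan is to derive each part from the defining properties (i)--(iv) of the $g$-polynomial together with known structural results. In each case the aim is to reduce to the uniqueness in the definition, or to cite the original sources.

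For (iii), multiplicativity gives the empty-matroid value. Write $\varnothing$ for the empty matroid. Since $\U_{1,2}=\U_{1,2}\oplus\varnothing$, we get $t=t\cdot g_\varnothing(t)$, hence $g_\varnothing=1$. For the constant term and the degree bound, I would use the characterization of $g_\M$ through the $K$-theoretic (Fink--Speyer) formula, or Larson's description, in which $g_\M$ is a polynomial whose terms arise from flag data of $\M$. The vanishing of $g_\M(0)$ for nonempty $\M$ could alternatively be obtained by induction using covaluativity. The point is that the invariant $\M\mapsto g_\M(0)$ is covaluative and vanishes on series-parallel matroids and on matroids with loops or coloops. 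It also vanishes on all nonempty products, since each connected factor has zero constant term. By uniqueness it must then be the invariant that is zero on every nonempty matroid.

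For (ii), define $\tilde g_\M:=g_{\M^*}$. Duality preserves loops$\leftrightarrow$coloops, series-parallel connected matroids (series and parallel extensions are swapped), and direct sums. It also maps matroid subdivisions to matroid subdivisions via the affine map $x\mapsto\mathbf 1-x$ on base polytopes, and this map preserves interior cells. Hence $\tilde g$ satisfies (i)--(iv), and uniqueness gives $\tilde g=g$.

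For (i), the $2$-sum formula, I would follow Fink--Speyer. A $2$-sum $\M_1\oplus_2\M_2$ is related to the parallel connection $P(\M_1,\M_2)$, and $P(\M_1,\M_2)$ relates to the direct sum via a subdivision. Concretely, the polytope of $\M_1\oplus\M_2$ (after adding the basepoint twice) is subdivided in a way that exhibits $g_{\M_1}g_{\M_2}=t\,g_{\M_1\oplus_2\M_2}$. A cleaner route is to fix $\M_2$ and view both sides as functions of $\M_1$. The left side $t\,g_{\M_1\oplus_2\M_2}$ is covaluative in $\M_1$, because $2$-summing with a fixed matroid along $p$ commutes with subdivisions. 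The right side $g_{\M_1}g_{\M_2}$ is covaluative in $\M_1$ as well. Both sides then need to agree on the generators. These are the series-parallel case, where $2$-summing $\U_{1,2}$-like pieces gives series-parallel matroids and hence $t\cdot t/t$, together with the loop/coloop and direct-sum cases. The main obstacle is precisely this step. Covaluative invariants are not determined by connected series-parallel values alone without the full uniqueness argument, and the compatibility of $\oplus_2$ with interior cells of subdivisions must be checked carefully. I would therefore ultimately defer to \cite[Section~9]{FinkSpeyer2012} for this step.
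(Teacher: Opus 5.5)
Your plan agrees with the paper in substance. The paper proves this proposition only by citation: Speyer and Fink--Speyer for (i) and (ii), with Panzer and Ferroni--Schr\"oter as further references, and Larson for the constant term and degree bound. It obtains $g_\varnothing=1$ exactly as you do, from $\U_{1,2}=\U_{1,2}\oplus\varnothing$ and cancelling $t$. Your argument for (ii) goes beyond the paper and is correct. The map $\M\mapsto g_{\M^*}$ is a matroid invariant that exchanges the loop and coloop conditions and preserves direct sums. It also preserves connected series-parallel matroids on at least two elements, since series and parallel extensions are interchanged and $\U_{1,2}$ is self-dual. It is covaluative because $x\mapsto\mathbf 1-x$ carries a matroid subdivision of $P(\M)$ cell by cell onto one of $P(\M^*)$, sending $P(\N)$ to $P(\N^*)$ and interior cells to interior cells. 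The uniqueness built into the definition then gives $g_{\M^*}=g_\M$ without $K$-theory, at the cost of resting entirely on that uniqueness theorem.

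Your alternative argument for $g_\M(0)=0$ in (iii), however, does not work as written. The uniqueness in the definition concerns invariants that take the value $t$ on connected series-parallel matroids. The invariant $\M\mapsto g_\M(0)$ takes the value $0$ there, so that uniqueness says nothing about it. Comparing it with the invariant that vanishes on every nonempty matroid would require a stronger principle: that every multiplicative covaluative invariant vanishing on matroids with loops or coloops is determined by its values on connected series-parallel matroids. You have not established this, and it is not part of the characterization. In addition, your claim that the invariant vanishes on nonempty products \emph{since each connected factor has zero constant term} presupposes the conclusion for connected factors that are not series-parallel, so it is circular. Either drop this alternative and rely on the cited formula, as the paper does, or state and justify the stronger uniqueness principle.

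The same obstruction affects your sketched route to (i). The map $\M_1\mapsto t\,g_{\M_1\oplus_2\M_2}$ is an invariant of matroids with a distinguished element $p$, not of matroids, so the characterization's uniqueness does not apply to it. Deferring to Fink--Speyer, as you ultimately do, is what the paper does as well.
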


\section{Cuspidal matroids and recurrences}
\label{sec:corrections}

\subsection{The formula of Ferroni and Schr\"oter}
Following Ferroni and Schr\"oter
\cite[Definitions~3.1 and~3.7]{FerroniSchroter2024}, a subset $A$ of
the ground set of a matroid $\M$ is \emph{stressed} if both $\M|_A$
and $\M/A$ are uniform matroids. If $\M$ has rank $k$ and ground set $E$, the
\emph{cover} of $A$ is
\[
 \operatorname{cover}_{\M}(A)
 =\{B\subseteq E:|B|=k,\ |B\cap A|\ge\rk_{\M}(A)+1\}.
\]
Every member of this family is a nonbasis of $\M$.
Let $r,k,h,n$ be integers satisfying
\begin{equation}\label{eq:admissible-cuspidal}
 r\ge1,\qquad p:=k-r\ge1,\qquad
 b:=h-r\ge1,\qquad a:=n-k-h+r\ge1.
\end{equation}
We denote by $\La_{r,k,h,n}$ the cuspidal matroid of rank $k$ on $n$
elements introduced by Ferroni and Schr\"oter
\cite[Definition~3.21]{FerroniSchroter2024}, retaining their order of
parameters. Lattice path matroids were introduced by Bonin, de Mier,
and Noy \cite{BoninDeMierNoy2003}. To describe this matroid by a lattice path,
write $\Nstep=(0,1)$ and $\Estep=(1,0)$ for the north and east steps,
respectively. We write paths as words in $\Nstep$ and $\Estep$, with
powers denoting repeated steps. Following Panzer
\cite[Definition~3.8]{Panzer2025}, for a path $P$ from $(0,0)$ to
$(n-k,k)$, we write $\LM(P)$ for the lattice path matroid with upper
path $P$ and lower path $\Estep^{n-k}\Nstep^k$. Its ground set is
$\{1,\ldots,n\}$, and its bases are the sets of positions of the north
steps in paths that remain weakly above the lower path and weakly below
$P$. Matroids admitting such a presentation, with lower path
$\Estep^{n-k}\Nstep^k$, are called \emph{Schubert matroids}.
Since $p+r=k$ and $a+b=n-k$, the path
$\Nstep^p\Estep^a\Nstep^r\Estep^b$ runs from $(0,0)$ to $(n-k,k)$.
By Ferroni and Schr\"oter
\cite[Proposition~3.25]{FerroniSchroter2024}, we have
\begin{equation}\label{eq:cuspidal-path}
 \La_{r,k,h,n}
 \cong \LM(\Nstep^p\Estep^a\Nstep^r\Estep^b).
\end{equation}

To abbreviate the summand associated with a proper cyclic flat of rank
$r$ and cardinality $h$ in the formula below, we write
\begin{equation}\label{eq:correction-definition}
 \cC_{r,k,h,n}(t)
 :=g_{\La_{r,k,h,n}}(t)
   +g_{\U_{r,h}}(t)g_{\U_{k-r,n-h}}(t).
\end{equation}
The following proposition is the specialization of the covaluative
formula of Ferroni and Schr\"oter
\cite[Theorem~9.13]{FerroniSchroter2024} to Speyer's $g$-polynomial,
with the sum indexed by the proper cyclic flats.

\begin{proposition}\label{prop:FS-split}
Let $\M$ be a loopless, coloopless, connected split matroid of rank $k$
on an $n$-element ground set $E$. Then
\begin{equation}\label{eq:FS-split}
 g_\M(t)=g_{\U_{k,n}}(t)
 -\sum_{F\in\mathcal Z^{\circ}(\M)}
   \cC_{r_F,k,h_F,n}(t).
\end{equation}
Here $r_F=\rk_{\M}(F)$ and $h_F=|F|$.
\end{proposition}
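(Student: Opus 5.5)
This is a specialization of \cite[Theorem~9.13]{FerroniSchroter2024}, so the plan is to translate that theorem's data into the present notation rather than redo the covaluativity argument. Ferroni and Schr\"oter state their formula for an arbitrary covaluative invariant $f$ and an elementary split matroid $\M$ of rank $k$ on $n$ elements. It expresses $f(\M)$ as $f(\U_{k,n})$ minus a sum over the stressed subsets $F$ that arise as proper cyclic flats (equivalently, over the relaxation steps taking $\M$ to $\U_{k,n}$). The summand for $F$ is built from the cuspidal matroid $\La_{r_F,k,h_F,n}$ and the direct sum $\U_{r_F,h_F}\oplus\U_{k-r_F,n-h_F}$.

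The first step is to check that the hypotheses match. A loopless, coloopless, connected split matroid on $n\ge2$ elements is elementary split by \cite[Theorem~11]{BercziEtAl2023}. Its proper cyclic flats are pairwise incomparable. By Proposition~\ref{prop:cyclic-flat-description}, together with \eqref{eq:positive-parameters}, each such $F$ satisfies the admissibility conditions \eqref{eq:admissible-cuspidal}, so $\La_{r_F,k,h_F,n}$ is defined. Each such $F$ is also stressed, because the restriction to $F$ and the contraction by $F$ are uniform. This uses incomparability: the only cyclic flats of $\M|_F$ are $\varnothing$ and $F$, and dually for $\M/F$.

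The second step is to specialize to $f=g$. Property~(iv) of the definition says $g$ is covaluative, so Theorem~9.13 applies directly. Property~(iii) rewrites $g$ of the direct sum as $g_{\U_{r_F,h_F}}(t)\,g_{\U_{k-r_F,n-h_F}}(t)$. Collecting the two contributions of each cyclic flat then gives exactly $\cC_{r_F,k,h_F,n}(t)$ as defined in \eqref{eq:correction-definition}.

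The main obstacle is bookkeeping of conventions, in two places. First, the sign and covaluative convention: the formula in \cite{FerroniSchroter2024} is phrased for valuative invariants via relaxation, $f(\M)=f(\U_{k,n})-\sum(\cdots)$. For a covaluative invariant the boundary cells of the subdivision $P(\U_{k,n})=P(\M)\cup\bigcup P(\La\text{-pieces})$ are dropped. This is where the $+$ between the cuspidal term and the product term comes from: the interior cell is the cuspidal piece, and the shared facet is $P(\U_{r,h}\oplus\U_{k-r,n-h})$, which counts once for $g$. Second, parameter order: one must confirm that the cuspidal matroid attached to $F$ in their theorem is $\La_{r_F,k,h_F,n}$ in the order fixed above and not its dual. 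This is consistent with \eqref{eq:cuspidal-path} and \cite[Definition~3.21]{FerroniSchroter2024}. I would verify the sign by a sanity check on a single-relaxation example, for instance $\M$ sparse paving with one circuit-hyperplane $F$. There $\cC$ should reduce to the known correction $g_{\La}(t)+g_{\U_{k-1,k}}g_{\U_{1,n-k}}$, which equals the term in \cite[Theorem~9.21]{FerroniSchroter2024}.
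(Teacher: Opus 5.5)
Your proposal takes the same route as the paper: apply \cite[Theorem~9.13]{FerroniSchroter2024}, use pairwise incomparability of the proper cyclic flats to match its index set with $\mathcal Z^{\circ}(\M)$, and use multiplicativity to turn each summand into $\cC_{r_F,k,h_F,n}(t)$. Your sign discussion is consistent with the paper's unsigned covaluative convention, though not needed, since that theorem is already the covaluative version. The one step to complete is the index set: their sum runs over stressed subsets with nonempty cover, so besides checking that each proper cyclic flat is stressed and has nonempty cover (the latter from $b_F,p_F\ge1$), you also need the converse, that every stressed subset $A$ with nonempty cover is a proper cyclic flat, so that no extra summands appear. The paper takes this from \cite[Proposition~3.9]{FerroniSchroter2024}; it also follows directly, since such an $A$ has $\rk_{\M}(A)<\min\{|A|,k\}$, so $\M|_A$ is coloopless and $\M/A$ is loopless, making $A$ a cyclic flat other than $\varnothing$ and $E$.
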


\begin{proof}
Since $\M$ is connected and split, it is elementary split, so its proper
cyclic flats are pairwise incomparable. Hence each proper cyclic flat
$F$ determines a saturated chain
$\varnothing\subsetneq F\subsetneq E$ in $\mathcal Z(\M)$.
Ferroni and Schr\"oter \cite[Proposition~3.9]{FerroniSchroter2024}
identify these subsets with the stressed subsets with nonempty cover.
Their covaluative formula is therefore indexed by the proper cyclic
flats of $\M$. By the multiplicativity of the $g$-polynomial under
direct sums, the summand corresponding to $F$ is
$\cC_{r_F,k,h_F,n}(t)$, as defined in
\eqref{eq:correction-definition}. This proves \eqref{eq:FS-split}.
\end{proof}

\subsection{Uniform matroids}
The following identity is the uniform case of the lattice path
recurrences in Panzer \cite[Section~3.2]{Panzer2025}; it is also stated
explicitly by Wang \cite[Lemma~2.4, equation~(2.2)]{Wang2026}.
We give a coefficient proof using Speyer's formula and Pascal's identity.

\begin{lemma}\label{lem:uniform-recurrence}
If $2\le k\le n-2$, then
\begin{equation}\label{eq:uniform-recurrence}
 g_{\U_{k,n}}(t)
 =g_{\U_{k,n-1}}(t)+g_{\U_{k-1,n-1}}(t)+t g_{\U_{k-1,n-2}}(t).
\end{equation}
\end{lemma}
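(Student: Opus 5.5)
The plan is to compare coefficients of $t^i$ on both sides, using Speyer's closed formula for uniform matroids and the trinomial form of Pascal's identity. Speyer's formula, for $1\le k\le n-1$, reads
\[
 g_{\U_{k,n}}(t)=\sum_{i\ge1} c_i(k,n)\,t^i,\qquad
 c_i(k,n)=\frac{(n-i-1)!}{(k-i)!\,(n-k-i)!\,(i-1)!}.
\]
Here we use the convention that $c_i(k,n)=0$ whenever one of $k-i$, $n-k-i$, $i-1$ is negative. This makes the sum run over $1\le i\le\min\{k,n-k\}$, consistent with Proposition~\ref{prop:g-properties}(iii). As a sanity check, it gives $g_{\U_{1,2}}=t$ and $g_{\U_{2,4}}=2t+t^2$. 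The key observation is that $c_i(k,n)$ is the multinomial coefficient $\binom{N}{\alpha,\beta,\gamma}$ with $N=n-i-1$, $\alpha=k-i$, $\beta=n-k-i$, $\gamma=i-1$, and $\alpha+\beta+\gamma=N$.

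First I check that the hypothesis $2\le k\le n-2$ lets the formula be applied to every matroid on the right. The matroids $\U_{k,n-1}$, $\U_{k-1,n-1}$ and $\U_{k-1,n-2}$ all have rank between $1$ and corank at least $1$. In particular, $\U_{k-1,n-2}$ has rank $k-1\ge1$ and corank $n-k-1\ge1$. None of them is empty or has a loop or coloop, so Speyer's formula applies to each.

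Next I expand the trinomial Pascal identity
\[
 \binom{N}{\alpha,\beta,\gamma}
 =\binom{N-1}{\alpha,\beta-1,\gamma}
 +\binom{N-1}{\alpha-1,\beta,\gamma}
 +\binom{N-1}{\alpha,\beta,\gamma-1},
\]
with the convention that a coefficient with a negative lower index vanishes. I identify the three summands as follows.
\begin{itemize}
\item The first is $c_i(k,n-1)$, whose indices are $(n-i-2;\,k-i,\,n-1-k-i,\,i-1)$.
\item The second is $c_i(k-1,n-1)$, whose indices are $(n-i-2;\,k-1-i,\,n-k-i,\,i-1)$.
\item The third is $c_{i-1}(k-1,n-2)$, since substituting $i-1$ into the indices for $\U_{k-1,n-2}$ gives $(n-i-2;\,k-i,\,n-k-i,\,i-2)$.
\end{itemize}
The third is exactly the coefficient of $t^i$ in $t\,g_{\U_{k-1,n-2}}(t)$. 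Summing over $i\ge1$ then yields \eqref{eq:uniform-recurrence} coefficientwise.

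The main point needing care is the boundary of the summation ranges. Each side must be summed over all $i\ge1$ with the zero convention, and I need that convention to agree with the actual supports of the polynomials. At $i=1$, the third term has $\gamma-1=-1$; it vanishes, matching the fact that $t\,g_{\U_{k-1,n-2}}$ has no linear term since $g_{\U_{k-1,n-2}}(0)=0$. At the top, $i=\min\{k,n-k\}$, one of $\alpha-1$ or $\beta-1$ becomes negative, and the corresponding term vanishes, matching the degree bounds of the smaller uniform matroids. Finally, I check that Pascal's identity remains valid in the degenerate cases where some index of the left side is zero. Since the only way to get a nonzero term with $N=0$ is $\alpha=\beta=\gamma=0$, which would force $n=i+1$ together with $k=i$ and $n-k=i$, that case cannot occur for $i\ge1$. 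So the identity holds in the form needed.
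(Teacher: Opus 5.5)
Your proof is correct and is essentially the paper's argument: both compare coefficients of $t^i$ using Speyer's formula. Your single trinomial Pascal step merges the paper's two successive binomial Pascal steps into one. There is one small slip in your last check: the degenerate case $N=\alpha=\beta=\gamma=0$ does occur with $i\ge1$, namely at $(n,k,i)=(2,1,1)$. So it is the hypothesis $k\ge2$ (or $n\ge4$), not $i\ge1$, that excludes it.
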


\begin{proof}
By Speyer \cite[Proposition~10.1]{Speyer2009}, for $1\le k\le n-1$,
\begin{equation}\label{eq:intro-uniform}
 g_{\U_{k,n}}(t)
 =\sum_{i=1}^{\min\{k,n-k\}}
 \binom{n-i-1}{k-i}\binom{n-k-1}{i-1}t^i.
\end{equation}
We take $\binom{m}{j}=0$ unless $0\le j\le m$ for integers $m$ and $j$.
For $i\ge0$, let $u_i(n,k)$ denote the coefficient of $t^i$ in
$g_{\U_{k,n}}(t)$, given by \eqref{eq:intro-uniform}.
In particular, $u_i(n,k)=0$ unless $1\le i\le\min\{k,n-k\}$.

Both sides of \eqref{eq:uniform-recurrence} have zero constant term and
degree at most $\min\{k,n-k\}$. Fix $1\le i\le\min\{k,n-k\}$.
The coefficient of $t^i$ on the right-hand side is
$u_i(n-1,k)+u_i(n-1,k-1)+u_{i-1}(n-2,k-1)$.
By \eqref{eq:intro-uniform} and Pascal's identity,
\[
 u_i(n-1,k)+u_{i-1}(n-2,k-1)
 =
 \binom{n-i-2}{k-i}\binom{n-k-1}{i-1}.
\]
Adding $u_i(n-1,k-1)$ and applying Pascal's identity once more gives
$u_i(n,k)$. The coefficients on the two sides therefore agree.
\end{proof}

\subsection{Recurrences for the polynomials \texorpdfstring{$\cC_{r,k,h,n}(t)$}{C(r,k,h,n;t)}}

The next proposition combines the lattice path recurrences recorded by
Panzer \cite[Section~3.2]{Panzer2025} with
Lemma~\ref{lem:uniform-recurrence} and duality. We express these known
recurrences in terms of the correction polynomials $\cC_{r,k,h,n}$.
The two forms distinguish whether the chosen element $e$ lies in a
proper cyclic flat $F$; the boundary cases account for the disappearance
of that cyclic flat after deletion or contraction.

For a set $A$ and an element $e$, we write
$A-e=A\setminus\{e\}$.
We prove the first recurrence using Ferroni's formula for Schubert
matroids as a weighted sum over admissible Delannoy paths and
Lemma~\ref{lem:uniform-recurrence}. Duality gives the second recurrence.
In polynomial identities, we occasionally omit the
argument $t$ from $g_\M(t)$ and $\cC_{r,k,h,n}(t)$.

\begin{proposition}\label{lem:correction-recurrences}
Let $r,k,h,n$ satisfy \eqref{eq:admissible-cuspidal}, with
$p=k-r$, $b=h-r$, and $a=n-k-h+r$.
\begin{enumerate}
\renewcommand{\labelenumi}{(\roman{enumi})}
\item If $r\ge2$ and $b\ge2$, then
\begin{equation}\label{eq:internal-recurrence}
 \cC_{r,k,h,n}
 =\cC_{r,k,h-1,n-1}
  +\cC_{r-1,k-1,h-1,n-1}
  +t\cC_{r-1,k-1,h-2,n-2}.
\end{equation}
If $r\ge2$ and $b=1$, then
\begin{equation}\label{eq:internal-boundary}
 \cC_{r,k,r+1,n}=\cC_{r-1,k-1,r,n-1}.
\end{equation}

\item If $a\ge2$ and $p\ge2$, then
\begin{equation}\label{eq:external-recurrence}
 \cC_{r,k,h,n}
 =\cC_{r,k,h,n-1}
  +\cC_{r,k-1,h,n-1}
  +t\cC_{r,k-1,h,n-2}.
\end{equation}
If $a\ge2$ and $p=1$, then
\begin{equation}\label{eq:external-boundary}
 \cC_{k-1,k,h,n}=\cC_{k-1,k,h,n-1}.
\end{equation}
\end{enumerate}
\end{proposition}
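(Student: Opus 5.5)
We prove (i) directly and deduce (ii) from it by duality. The plan has three steps.

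\textbf{Splitting $\cC$ into two pieces.} By definition, $\cC_{r,k,h,n}=g_{\La_{r,k,h,n}}+g_{\U_{r,h}}g_{\U_{k-r,n-h}}$. We treat the two summands separately.

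For the product term, we apply Lemma~\ref{lem:uniform-recurrence} to the factor $g_{\U_{r,h}}$ when $r\ge2$ and $b=h-r\ge2$, that is, when $2\le r\le h-2$. This gives
\[
 g_{\U_{r,h}}=g_{\U_{r,h-1}}+g_{\U_{r-1,h-1}}+t\,g_{\U_{r-1,h-2}}.
\]
Multiplying by $g_{\U_{k-r,n-h}}$ and noting that $k-r$ and $n-h$ are unchanged in each of the three shifted index quadruples $(r,k,h-1,n-1)$, $(r-1,k-1,h-1,n-1)$ and $(r-1,k-1,h-2,n-2)$, the product terms satisfy exactly the recurrence \eqref{eq:internal-recurrence}.

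It therefore remains to prove the same three-term recurrence for the cuspidal part:
\[
 g_{\La_{r,k,h,n}}=g_{\La_{r,k,h-1,n-1}}+g_{\La_{r-1,k-1,h-1,n-1}}+t\,g_{\La_{r-1,k-1,h-2,n-2}}.
\]
In path terms, the three matroids on the right correspond to $\Nstep^p\Estep^a\Nstep^r\Estep^{b-1}$, $\Nstep^p\Estep^a\Nstep^{r-1}\Estep^{b}$ and $\Nstep^p\Estep^a\Nstep^{r-1}\Estep^{b-1}$ via \eqref{eq:cuspidal-path}. This is the lattice path recurrence of Panzer, obtained by removing the last step of the path. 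To prove it from Ferroni's formula, we write $g_{\LM(P)}$ as a weighted sum over admissible Delannoy paths ending at $(n-k,k)$ and staying under $P$. We then split these paths by their final step: an $\Estep$ step, an $\Nstep$ step, or a diagonal step, the diagonal carrying weight $t$. Since the last corner of $P$ is $\Nstep^r\Estep^b$ with $r,b\ge2$, removing the final step lands in the region of the corresponding smaller path, and this gives a bijection onto the admissible paths for the three smaller shapes. This bijection is the main obstacle. The delicate point is that admissibility in Ferroni's definition imposes conditions on diagonal steps near the boundary (e.g.\ forbidding diagonal steps along certain edges). We must check that these conditions are preserved exactly when the last corner has both sides of length at least $2$; when $b=1$ or $r=1$ some terms would degenerate. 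If the direct check becomes messy, we instead cite Panzer's recurrence \cite[Section~3.2]{Panzer2025} for Schubert matroids, specialized to this path.

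\textbf{Boundary case $b=1$.} Here $\U_{r,r+1}$ is a circuit and $\U_{r-1,r}$ is also a circuit, so $g_{\U_{r,r+1}}=g_{\U_{r-1,r}}=t$ since both are connected series-parallel; the remaining factor $g_{\U_{k-r,n-h}}$ is unchanged. For the cuspidal part, the paths $\Nstep^p\Estep^a\Nstep^r\Estep$ and $\Nstep^p\Estep^a\Nstep^{r-1}\Estep$ should give equal $g$-polynomials. We expect this to follow either from the same final-step decomposition, where the $\Estep$-final and diagonal contributions cancel or vanish, or more cleanly from the fact that the element in the final $\Estep$ position lies in a series class. Contracting that class then reduces $\La_{r,k,r+1,n}$ to $\La_{r-1,k-1,r,n-1}$ via the $2$-sum formula of Proposition~\ref{prop:g-properties}(i), since a series pair is a $2$-sum with a circuit of size $3$ whose $g$ equals $t$.

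\textbf{Duality for (ii).} We have $\La_{r,k,h,n}^*\cong\La_{a,n-k,n-h,n}$ (the path reversed and transposed), and $(\U_{r,h}\oplus\U_{k-r,n-h})^*=\U_{h-r,h}\oplus\U_{n-h-k+r,n-h}$. Together with Proposition~\ref{prop:g-properties}(ii), this gives $\cC_{r,k,h,n}=\cC_{a,n-k,n-h,n}$, where the new parameters are $(r',p',b',a')=(a,b,p,r)$. Applying (i) to the dual quadruple, which is valid since $a\ge2$ and $p\ge2$, and translating the indices back yields \eqref{eq:external-recurrence}; the case $p=1$ gives \eqref{eq:external-boundary} from \eqref{eq:internal-boundary}.
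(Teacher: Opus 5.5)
Your proposal is correct and follows essentially the same route as the paper: the product term is handled by Lemma~\ref{lem:uniform-recurrence} with $k-r$ and $n-h$ held fixed, the cuspidal term by the last-step decomposition of Ferroni's admissible Delannoy paths, and part (ii) by the duality $\La_{r,k,h,n}^*\cong\La_{a,n-k,n-h,n}$. The paper carries out the bijection check you defer, using $a+b-1>a$ when $b\ge2$, and also identifies the result with Panzer's recurrence. For $b=1$ the paper instead shows that the final step must be north, but your series-class alternative is also valid: the last $r+1$ positions form a series class, and contracting one element of that class (not the whole class) gives $\La_{r-1,k-1,r,n-1}$, with $g$ unchanged by the $2$-sum with $\U_{2,3}$.
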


\begin{proof}
We first prove (i) by treating the two summands in
\eqref{eq:correction-definition} separately. Assume $r\ge2$ and write
$Q=\Nstep^p\Estep^a\Nstep^{r-1}$.
Each upper path used below begins with a north step and ends with an east
step, so the corresponding lattice path matroid is loopless and coloopless.
For such a path $P$, let $\mathcal A(P)$ denote the set of admissible
Delannoy paths defined by Ferroni \cite[Definition~3.1]{Ferroni2023}.
These paths start at $(1,1)$, end at the endpoint of $P$, and use north
steps $\Nstep$, east steps $\Estep$, and diagonal steps $(1,1)$.
They remain weakly below $P$ and have no north step overlapping $P$.
A diagonal step is allowed only if the north step from the same initial
point remains weakly below $P$ and does not overlap $P$.
For $\pi\in\mathcal A(P)$, let $d(\pi)$ denote its number of diagonal steps.
Ferroni \cite[Theorem~3.4]{Ferroni2023} proves that
\[
 g_{\LM(P)}(t)=\sum_{\pi\in\mathcal A(P)}t^{d(\pi)+1}.
\]

Suppose first that $b\ge2$. The upper path $Q\Nstep\Estep^b$ ends at
$(a+b,k)$. We partition $\mathcal A(Q\Nstep\Estep^b)$ according to
the last step of each path. Deleting that step gives an admissible path for
$Q\Estep^b$, $Q\Nstep\Estep^{b-1}$, or $Q\Estep^{b-1}$, according as
the deleted step is north, east, or diagonal. The corresponding penultimate
points are $(a+b,k-1)$, $(a+b-1,k)$, and $(a+b-1,k-1)$,
respectively. In the north and diagonal cases, the truncated path has height at most
$k-1$, so removing the last north step from the upper path does not change
its admissibility. In the east and diagonal cases, removing the last east
step simply restricts the diagram to the width of the truncated path.
Conversely, appending the corresponding step recovers an admissible path.
Indeed, $a+b-1>a$, so the appended north and diagonal steps lie strictly
to the right of the last vertical segment of the upper path.
Deleting and appending the last step are inverse operations.
The north and east cases preserve the weight $t^{d(\pi)+1}$.
In the diagonal case, the original path has one more diagonal step than
the truncated path, so its weight is $t$ times the weight of the
truncated path.

Suppose now that $b=1$. No path in $\mathcal A(Q\Nstep\Estep)$ can
pass through $(a,k)$. An east step into that point would start above
the upper path, a north step would overlap the upper path, and a
diagonal step would start above the upper path, since $k-1>p$.
Thus the last step into $(a+1,k)$ cannot be east.
A final diagonal step would begin at $(a,k-1)$,
where a north step overlaps the upper path, so this is also forbidden.
The last step must therefore be north, and deleting it gives a bijection
with $\mathcal A(Q\Estep)$ that preserves the number of diagonal steps.
The two cases therefore give
\begin{align}
 g_{\LM(Q\Nstep\Estep^b)}
 ={}&g_{\LM(Q\Estep^b)}
   +g_{\LM(Q\Nstep\Estep^{b-1})}
   +t g_{\LM(Q\Estep^{b-1})}
   &&\text{for }b\ge2,\label{eq:path-three-term}\\
 g_{\LM(Q\Nstep\Estep)}
 ={}&g_{\LM(Q\Estep)}.
 &&\label{eq:path-boundary}
\end{align}
These are the two lattice path recurrences from
\cite[Section~3.2]{Panzer2025}, specialized to the paths used here.
Deleting the last north step from the upper path changes
$(r,k,h,n)$ to $(r-1,k-1,h-1,n-1)$, whereas deleting the last east
step changes these parameters to $(r,k,h-1,n-1)$. Deleting both steps
gives $(r-1,k-1,h-2,n-2)$. In each case, $p$ and $a$ remain unchanged.
For $b\ge2$, the lattice path presentation
\eqref{eq:cuspidal-path} therefore turns \eqref{eq:path-three-term}
into the following identity for the $g$-polynomials of cuspidal matroids:
\begin{equation}\label{eq:L-internal}
 g_{\La_{r,k,h,n}}
 =g_{\La_{r,k,h-1,n-1}}
  +g_{\La_{r-1,k-1,h-1,n-1}}
  +t g_{\La_{r-1,k-1,h-2,n-2}}.
\end{equation}
Lemma~\ref{lem:uniform-recurrence} also applies to the
$g$-polynomial of $\U_{r,h}$, since $r\ge2$ and $h-r=b\ge2$.
Multiplying this recurrence by
$g_{\U_{p,n-h}}$ gives a recurrence for the product
$g_{\U_{r,h}}g_{\U_{p,n-h}}$. Since both $p=k-r$ and $n-h$ are
unchanged by the three parameter substitutions above, each product in
this recurrence is the second summand of the corresponding polynomial
$\cC$ in \eqref{eq:internal-recurrence}. Adding the recurrence for these
products to \eqref{eq:L-internal} proves \eqref{eq:internal-recurrence}.

When $b=1$, equation \eqref{eq:path-boundary} gives
\[
 g_{\La_{r,k,r+1,n}}=g_{\La_{r-1,k-1,r,n-1}}.
\]
Since $r\ge2$, we also have $g_{\U_{r,r+1}}=g_{\U_{r-1,r}}=t$. The factor
$g_{\U_{p,n-r-1}}$ is unchanged by the substitution
$(r,k,r+1,n)\mapsto(r-1,k-1,r,n-1)$. Substituting these equalities into
\eqref{eq:correction-definition} proves \eqref{eq:internal-boundary}.

For (ii), Ferroni and Schr\"oter
\cite[Lemma~3.26]{FerroniSchroter2024} give
\[
 \La_{r,k,h,n}^*\cong\La_{a,n-k,n-h,n}.
\]
Together with
$\U_{r,h}^*\cong\U_{h-r,h}$ and
$\U_{p,n-h}^*\cong\U_{n-h-p,n-h}$, the invariance of the
$g$-polynomial under duality in Proposition~\ref{prop:g-properties} gives
\[
 \cC_{r,k,h,n}=\cC_{a,n-k,n-h,n}.
\]
The substitution $(r,k,h,n)\mapsto(a,n-k,n-h,n)$ changes
$(r,p,b,a)$ to $(a,b,p,r)$. Thus $a\ge2$ and $p\ge2$ are precisely
the hypotheses needed to apply part (i) to
$\cC_{a,n-k,n-h,n}$. Applying \eqref{eq:internal-recurrence} and
then using duality to rewrite each polynomial gives
\eqref{eq:external-recurrence}. If $a\ge2$ and $p=1$, applying
\eqref{eq:internal-boundary} to $\cC_{a,n-k,n-h,n}$ instead gives
\eqref{eq:external-boundary}.
\end{proof}
\section{Cyclic flats and connectedness under deletion and contraction}
\label{sec:minors}

Let $\M$ be a connected elementary split matroid of rank $k$ on an
$n$-element ground set $E$, where $n\ge4$. We first describe the
cyclic flats of the deletion and contraction minors under conditions
on the chosen element. When $\M$ is also simple and cosimple, this
description shows that among all single-element deletions and
contractions, at most one is disconnected.

Suppose that $e\in E$ satisfies the conditions
\begin{equation}\label{eq:local-bounds}
 e\in F\ \Longrightarrow\ r_F\ge2,
 \qquad
 e\notin F\ \Longrightarrow\ a_F\ge2,
 \qquad F\in\mathcal Z^{\circ}(\M).
\end{equation}
These conditions hold whenever $\M\setminus e$ and $\M/e$ are both
connected. Indeed, $\M$ and these two minors then have neither loops
nor coloops, since each has at least two elements. Their ranks give
$k-1=\rk(\M/e)\ge1$ and $k=\rk(\M\setminus e)\le n-2$, so
$2\le k\le n-2$. If $e\in F$ and $r_F=1$, cyclicity gives
$|F|\ge2$. Any $f\in F-e$ is parallel to $e$ and therefore becomes
a loop in $\M/e$, a contradiction. If $e\notin F$ and $a_F=1$,
then $E-F$ is a cyclic flat of $\M^*$ of rank one containing $e$.
The same argument shows that some $f\in E-(F\cup\{e\})$ is a loop in
$\M^*/e=(\M\setminus e)^*$, and hence a coloop in $\M\setminus e$,
again a contradiction. If $\M$ is simple and cosimple, then
$2\le k\le n-2$ and \eqref{eq:local-bounds} hold for every $e\in E$
by \eqref{eq:simple-cosimple-bounds}.

\subsection{Proper cyclic flats under deletion and contraction}
The description of independent sets in
Proposition~\ref{prop:cyclic-flat-description} determines both minors.
The same method is used in B\'erczi et al.\
\cite[proof of Theorem~8]{BercziEtAl2023} to prove that elementary split
matroids are closed under taking minors. We record the explicit
cyclic flats and their ranks under \eqref{eq:local-bounds}, removing
the redundant inequalities and checking the remaining conditions.

\begin{lemma}\label{lem:minor-presentations}
Let $\M$ be a connected elementary split matroid of rank $k$ on an
$n$-element ground set $E$, where $2\le k\le n-2$, and let $e\in E$
satisfy \eqref{eq:local-bounds}. Then $\M\setminus e$ and
$\M/e$ are loopless, coloopless elementary split matroids of ranks $k$
and $k-1$, respectively. Their proper cyclic flats, paired with their
ranks, form the sets
\begin{equation}\label{eq:deletion-presentation}
 \cD_e=\{(F-e,r_F):e\in F,\ b_F\ge2\}
 \cup\{(F,r_F):e\notin F\}
\end{equation}
for $\M\setminus e$, and
\begin{equation}\label{eq:contraction-presentation}
 \cK_e=\{(F-e,r_F-1):e\in F\}
 \cup\{(F,r_F):e\notin F,\ p_F\ge2\}
\end{equation}
for $\M/e$. In both formulas, $F$ ranges over $\mathcal Z^{\circ}(\M)$.
\end{lemma}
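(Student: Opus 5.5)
We will read off both minors from the description \eqref{eq:independent-sets} of the independent sets of $\M$, discard redundant constraints, and then apply Proposition~\ref{prop:cyclic-flat-description} to the surviving data to identify the minors as loopless, coloopless elementary split matroids with the stated proper cyclic flats and ranks. Since a matroid is determined by its independent sets, it suffices to check two things: the constraint systems coincide after removing redundant inequalities, and the remaining data satisfy \eqref{eq:rank-condition}--\eqref{eq:intersection-condition}.

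\emph{Deletion.} The independent sets of $\M\setminus e$ are the $X\subseteq E-e$ with $|X|\le k$ and $|X\cap(F-e)|\le r_F$ for all $F$. We will show that the constraint attached to $F$ is redundant precisely when $e\in F$ and $b_F=1$. In that case $|F-e|=r_F$, so the constraint holds for every $X$. The rank is $k$, because $k\le n-2$ and $\M\setminus e$ has a basis. We then verify the conditions on $\cD_e$:
\begin{itemize}
\item \eqref{eq:rank-condition} is unchanged.
\item \eqref{eq:size-condition} holds because $|F-e|=r_F+b_F-1\ge r_F+1$ when $b_F\ge2$, and $|F|\ge r_F+1$ when $e\notin F$.
\item For \eqref{eq:complement-condition}, note $|(E-e)\setminus F'|=n-1-|F'|$. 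If $e\in F$, then $F'=F-e$ and this equals $n-|F|=k-r_F+a_F\ge k-r_F+1$. If $e\notin F$, it equals $k-r_F+a_F-1$, which is at least $k-r_F+1$ by \eqref{eq:local-bounds}.
\item \eqref{eq:intersection-condition} follows from \eqref{eq:H1}, since intersections can only shrink.
\end{itemize}
The sets in $\cD_e$ are distinct, since distinct $F$ remain distinct after removing $e$: pairwise intersections are strictly smaller than the sizes. The inequality $k<|E-e|$ holds because $k\le n-2$.

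\emph{Contraction.} Here $X\subseteq E-e$ is independent in $\M/e$ if and only if $X\cup e$ is independent in $\M$. Thus $|X|\le k-1$, $|X\cap(F-e)|\le r_F-1$ when $e\in F$, and $|X\cap F|\le r_F$ when $e\notin F$. For $e\notin F$ with $p_F=1$ we have $r_F=k-1$, so that constraint is implied by $|X|\le k-1$ and is dropped. We then verify the conditions on $\cK_e$, with rank $k-1$:
\begin{itemize}
\item \eqref{eq:rank-condition}: if $e\in F$, then $1\le r_F-1$ by \eqref{eq:local-bounds} and $r_F-1\le k-2$. If $e\notin F$, then $r_F\le k-2$ because $p_F\ge2$.
\item \eqref{eq:size-condition}: if $e\in F$, then $|F-e|=r_F+b_F-1\ge r_F$, which is the required bound $(r_F-1)+1$. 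If $e\notin F$, then $|F|\ge r_F+1$ as before.
\item \eqref{eq:complement-condition}: if $e\in F$, then $|(E-e)\setminus(F-e)|=n-|F|\ge (k-1)-(r_F-1)+1$. If $e\notin F$, then $n-1-|F|=k-r_F+a_F-1\ge(k-1)-r_F+1$ because $a_F\ge1$.
\item \eqref{eq:intersection-condition} with $k-1$ in place of $k$: for two flats both containing $e$, $|(F-e)\cap(G-e)|=|F\cap G|-1\le(r_F-1)+(r_G-1)-(k-1)$. For $e\in F$ and $e\notin G$, $|(F-e)\cap G|=|F\cap G|\le(r_F-1)+r_G-(k-1)$. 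For two flats both avoiding $e$, the bound follows with slack.
\end{itemize}
Distinctness of the sets in $\cK_e$ holds as before, and $k-1\ge1$ because $k\ge2$.

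\emph{Main obstacle.} The delicate point is showing that the surviving constraints are not redundant, that is, that each listed pair really is a proper cyclic flat rather than merely an admissible inequality. We do not need to prove this directly: Proposition~\ref{prop:cyclic-flat-description} constructs a matroid with exactly these proper cyclic flats and with independent sets given by exactly these inequalities. Since the independent sets agree with those of the minor, the two matroids coincide. The main risk is therefore only in the bookkeeping of the boundary cases:
\begin{itemize}
\item $b_F=1$ for deletion, and $p_F=1$ for contraction, where the constraint is dropped;
\item the two uses of \eqref{eq:local-bounds}: $a_F\ge2$ in the complement bound for deletion, and $r_F\ge2$ in the rank bound for contraction.
\end{itemize}
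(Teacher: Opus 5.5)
Your proposal is correct and follows essentially the same route as the paper. You read the minors' independent sets off \eqref{eq:independent-sets}, drop the constraints made vacuous by $b_F=1$ (deletion) or $p_F=1$ (contraction), check \eqref{eq:rank-condition}--\eqref{eq:intersection-condition} case by case using $a_F\ge2$ and $r_F\ge2$ from \eqref{eq:local-bounds} exactly where the paper does, and identify the minors with the matroids of Proposition~\ref{prop:cyclic-flat-description}. Two small things to tidy: the contraction description ($X$ independent in $\M/e$ if and only if $X\cup\{e\}$ is independent in $\M$) needs $e$ not to be a loop, which holds because $\M$ is connected with $n\ge4$; and ``$k\le n-2$'' does not justify that $\M\setminus e$ has rank $k$, which should instead come from $e$ not being a coloop or simply from the final identification.
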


\begin{proof}
Since $\M$ is connected and $n\ge4$, $e$ is neither a loop nor a coloop.
Thus $\M\setminus e$ and $\M/e$ have ranks $k$ and $k-1$, respectively.
We first describe their independent sets using
\eqref{eq:independent-sets}. A set $I\subseteq E-e$ is independent
in $\M\setminus e$ if and only if $|I|\le k$ and
$|I\cap(F-e)|\le r_F$ for every proper cyclic flat $F$ of $\M$.
When $e\in F$ and $b_F=1$, the latter inequality is automatic because
$|F-e|=r_F$. Removing these inequalities leaves precisely those indexed
by $\cD_e$. Similarly, $I$ is independent in $\M/e$ if and only if
$I\cup\{e\}$ is independent in $\M$. This gives $|I|\le k-1$,
together with $|I\cap(F-e)|\le r_F-1$ when $e\in F$ and
$|I\cap F|\le r_F$ when $e\notin F$. If $e\notin F$ and $p_F=1$,
then $r_F=k-1$, so the latter inequality follows from $|I|\le k-1$.
Removing these inequalities leaves precisely those indexed by $\cK_e$.

It remains to check that both families satisfy the hypotheses of
Proposition~\ref{prop:cyclic-flat-description}. Write $E'=E-e$ and
let $K=k$ for deletion and $K=k-1$ for contraction. For a pair $(R,s)$
in the corresponding family, we must verify that $s$, $K-s$,
$|R|-s$, and $|E'\setminus R|-(K-s)$ are positive. Their values are
\[
\begin{array}{c|c|c|c|c}
\text{minor and case}&s&K-s&|R|-s&|E'\setminus R|-(K-s)\\ \hline
\M\setminus e,\ e\in F&r_F&p_F&b_F-1&a_F\\
\M\setminus e,\ e\notin F&r_F&p_F&b_F&a_F-1\\
\M/e,\ e\in F&r_F-1&p_F&b_F&a_F\\
\M/e,\ e\notin F&r_F&p_F-1&b_F&a_F
\end{array}
\]
In the first row, $b_F\ge2$ is part of the definition of $\cD_e$;
in the last row, $p_F\ge2$ is part of the definition of $\cK_e$.
The bounds $r_F\ge2$ when $e\in F$ and $a_F\ge2$ when
$e\notin F$ are precisely the conditions in \eqref{eq:local-bounds}.
Together with \eqref{eq:positive-parameters}, these inequalities show
that all entries in the four numerical columns are positive.

We next verify the intersection bound. For deletion, it follows from \eqref{eq:H1},
since removing $e$ cannot enlarge an intersection and the assigned
ranks are unchanged. For contraction, write $\epsilon_F=1$ if
$e\in F$ and $\epsilon_F=0$ otherwise. The pair in $\cK_e$ arising
from $F$ is $(F-e,r_F-\epsilon_F)$. For distinct proper cyclic
flats $F,G$ contributing to $\cK_e$, we have
\begin{align*}
 |(F-e)\cap(G-e)|
 &=|F\cap G|-\epsilon_F\epsilon_G\\
 &\le r_F+r_G-k-\epsilon_F\epsilon_G\\
 &\le (r_F-\epsilon_F)+(r_G-\epsilon_G)-(k-1).
\end{align*}
The last inequality follows from
$(1-\epsilon_F)(1-\epsilon_G)\ge0$.
The intersection bound also shows that distinct proper cyclic flats
contributing to either family give distinct subsets of $E'$. Indeed,
if two such flats gave the same set $R$, with assigned ranks $s_F$ and
$s_G$, then
\[
 |R|\le s_F+s_G-K\le s_F-1<s_F+1\le |R|,
\]
a contradiction.

Finally, $2\le k\le n-2$ implies $0<K<|E'|$ for both minors.
Proposition~\ref{prop:cyclic-flat-description} therefore gives a
loopless, coloopless elementary split matroid for each of the families
$\cD_e$ and $\cK_e$. The description of independent sets at the
beginning of the proof identifies these matroids with $\M\setminus e$
and $\M/e$, respectively. Hence the two families give exactly their
proper cyclic flats and ranks, as claimed.
\end{proof}

\subsection{Connected deletions and contractions}

By B\'erczi et al.\ \cite[Theorem~11]{BercziEtAl2023}, every loopless,
coloopless, disconnected elementary split matroid is a direct sum
$\U_{r_1,m_1}\oplus\U_{r_2,m_2}$ with $0<r_i<m_i$ for $i=1,2$.
Consequently, it has exactly two proper cyclic flats: the ground sets
of its two connected components. These cyclic flats partition the
ground set, and their ranks sum to the rank of the matroid.
A \emph{$1$-separation} of $\M$ is a partition
$E(\M)=A\sqcup B$ into two nonempty sets such that
$\rk_{\M}(A)+\rk_{\M}(B)=\rk(\M)$. A nonempty matroid is
connected if and only if it has no $1$-separation.

\begin{proposition}\label{prop:connected-minors}
Let $\M$ be a simple, cosimple, connected elementary split matroid on
the ground set $E$. Then
\[
 \bigl|\{e\in E:\M\setminus e\text{ is disconnected}\}\bigr|
 +\bigl|\{e\in E:\M/e\text{ is disconnected}\}\bigr|
 \le1.
\]
In particular, there exists an element $e\in E$ such that
$\M\setminus e$ and $\M/e$ are both connected.
\end{proposition}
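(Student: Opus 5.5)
The plan is to translate disconnectedness of a single-element minor into a rigid configuration of proper cyclic flats of $\M$. Then I would show that two such configurations, for two different minors, cannot coexist.

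\textbf{Setup.} Since $\M$ is simple and cosimple, \eqref{eq:simple-cosimple-bounds} gives $k\ge2$ and $n-k\ge2$, so $n\ge4$. Moreover \eqref{eq:local-bounds} holds for every $e\in E$. Hence Lemma~\ref{lem:minor-presentations} applies to every $e$: the minors $\M\setminus e$ and $\M/e$ are loopless, coloopless elementary split matroids with proper cyclic flats $\cD_e$ and $\cK_e$. By the result of B\'erczi et al.\ quoted above, such a minor is disconnected if and only if its family consists of exactly two pairs $(R_1,s_1),(R_2,s_2)$ with $R_1\sqcup R_2=E-e$ and $s_1+s_2$ equal to the rank of the minor.

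\textbf{Disconnected deletion.} Let the two pairs in $\cD_e$ come from $F,G\in\mathcal Z^\circ(\M)$.
\begin{itemize}
\item If exactly one of them, say $F$, contains $e$, then $F\sqcup G=E$ and $r_F+r_G=k$. This is a $1$-separation of $\M$, which is impossible.
\item If both contain $e$, then $|F\cap G|\ge1$. But \eqref{eq:H1} together with $r_F+r_G=k$ forces $|F\cap G|\le0$, which is impossible.
\end{itemize}
So $\M\setminus e$ is disconnected exactly when $F,G$ both avoid $e$, satisfy $F\sqcup G=E-e$ and $r_F+r_G=k$, and every other proper cyclic flat $H$ of $\M$ has $e\in H$ and $b_H=1$.

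\textbf{Disconnected contraction.} Dually, the two pairs in $\cK_e$ come from $F,G$.
\begin{itemize}
\item If exactly one contains $e$, we again get a $1$-separation of $\M$.
\item If neither contains $e$, then $r_F+r_G=k-1$ while $F\cup G=E-e$ has rank $k$. This contradicts submodularity of the rank function.
\end{itemize}
So $e\in F\cap G$, $F\cap G=\{e\}$, $F\cup G=E$ and $r_F+r_G=k+1$. Every other proper cyclic flat $H$ avoids $e$ and has $p_H=1$. This description is the dual of the deletion one, since $\M/e=(\M^*\setminus e)^*$ and $\M^*$ is again simple, cosimple and elementary split.

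\textbf{Two disconnected deletions.} Suppose $\M\setminus e$ and $\M\setminus e'$ are both disconnected with $e\ne e'$. Let $\{F,G\}$ be the pair for $e$ and $\{F',G'\}$ the pair for $e'$, with $e'\in F$.
\begin{itemize}
\item Exactly one of $F',G'$ contains $e$. It is not $F$ or $G$, so it is some $H$ with $b_H=1$.
\item The other one avoids both $e$ and $e'$, so it must be $G$.
\item Then $H=(F\cup\{e\})-e'$ and $r_H=k-r_G=r_F$.
\end{itemize}
Now \eqref{eq:H1} for the distinct flats $H,F$ gives $|F|-1\le 2r_F-k$. Combined with $|F|\ge r_F+1$ this yields $r_F\ge k$, contradicting \eqref{eq:positive-parameters}. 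The case of two disconnected contractions follows by applying this argument to $\M^*$.

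\textbf{Mixed case: the main obstacle.} It remains to rule out $\M\setminus e$ and $\M/e'$ both disconnected, including $e=e'$. This is the step I expect to be hardest. We have disjoint $F,G$ with $F\sqcup G=E-e$ and $r_F+r_G=k$. We also have $F',G'$ with $F'\cap G'=\{e'\}$, $F'\cup G'=E$ and $r_{F'}+r_{G'}=k+1$.
\begin{itemize}
\item If $e=e'$, then $F',G'$ contain $e$, so they are flats $H$ with $b_H=1$. Conversely, $F,G$ avoid $e$, so they have $p=1$. Then $r_F=r_G=k-1$ and $2k-2=k$, so $k=2$. Similarly $r_{F'}=r_{G'}=(k+1)/2$ is not an integer when $k=2$. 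This is a contradiction.
\item If $e\ne e'$, I would first try to locate $e'$ in $F$ (by symmetry) and $e$ in one of $F',G'$. Then I would use the classification of all remaining proper cyclic flats: they all contain $e$ with $b=1$, and they all avoid $e'$ with $p=1$. Applying \eqref{eq:H1} to the pairs $(F,F')$, $(F,G')$, $(G,F')$ and $(G,G')$, summing, and using $|F|+|G|=n-1$ and $|F'|+|G'|=n+1$ should force an inequality of the form $n\le 2k$ together with its dual $n\ge 2k+2$, which is a contradiction. Some care is needed when $F',G'$ coincide with $F$ or $G$; those coincidences are excluded by comparing which of $e,e'$ each set contains.
\end{itemize}

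The final assertion then follows because the $2n$ minors include at least $2n-1\ge1$ connected ones. Since at most one minor is disconnected, at most one element fails, and $n\ge4$ leaves some $e$ with both $\M\setminus e$ and $\M/e$ connected.
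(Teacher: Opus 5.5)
Your case structure (classify a disconnected deletion, classify a disconnected contraction, then rule out two deletions, two contractions, and a mixed pair) is sound. The two classifications, the two-deletion argument, and the reduction of two contractions to $\M^*$ are correct.

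\textbf{The gap: mixed case with $e\ne e'$.} You only sketch this case, and the sketch fails as stated. Take $e'\in F$ and $e\in F'$. Then $G'$ contains $e'$ and avoids $e$, which is exactly the membership pattern of $F$. So your claim that coincidences ``are excluded by comparing which of $e,e'$ each set contains'' is false. In fact your deletion classification forces $G'=F$. Every proper cyclic flat avoiding $e$ is $F$ or $G$, and $G$ misses $e'$. Consequently \eqref{eq:H1} cannot be applied to the pair $(F,G')$, and the proposed four-term summation has no valid $(F,G')$ term. The predicted inequalities $n\le 2k$ and $n\ge 2k+2$ were in any case only a guess.

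\textbf{How to repair it in your framework.} The fix is short once you use the forced coincidence. Since $G\ne F'$ (as $e\in F'$) and $G\ne G'=F$, your contraction classification at $e'$ gives $p_G=1$. Hence $r_G=k-1$ and $r_F=k-r_G=1$, contradicting \eqref{eq:simple-cosimple-bounds}.

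\textbf{Minor slip in the case $e=e'$.} Your assertion $r_{F'}=r_{G'}=(k+1)/2$ is unjustified. It is also unnecessary: $r_F=k-1$ with $k=2$ already gives $r_F=1$, contradicting simplicity.

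\textbf{Comparison with the paper.} The paper avoids the pairwise case analysis. From one disconnected deletion it proves $\mathcal Z^{\circ}(\M)=\{F,G\}$. Any further flat $H$ would satisfy $e\in H$ and $b_H=1$, and hence $r_H=|H|-1=|(H-e)\cap F|+|(H-e)\cap G|\le 2r_H-k$, which is impossible. With only two disjoint proper cyclic flats, both avoiding $e$, Lemma~\ref{lem:minor-presentations} shows directly that every other deletion and contraction is connected. Duality then handles a disconnected contraction. Your two-deletion argument is this same computation for one particular $H$; doing it for every $H$ makes the mixed case immediate.
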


\begin{proof}
Write $k=\rk(\M)$. Simplicity and cosimplicity imply
$2\le k\le |E|-2$ and \eqref{eq:local-bounds} for every element, so
Lemma~\ref{lem:minor-presentations} applies throughout the proof.
Suppose first that $\M\setminus e$ is disconnected
for some $e\in E$. By Lemma~\ref{lem:minor-presentations} and the
characterization of disconnected elementary split matroids recalled above,
$\M\setminus e$ is a direct sum of two uniform matroids, each of
positive rank and corank. Let $A$ and $B$ be the ground sets of its
connected components. The deletion formula in
Lemma~\ref{lem:minor-presentations} gives unique proper cyclic flats
$F,G$ of $\M$ with $F-e=A$ and $G-e=B$. Deletion preserves the
assigned ranks of these cyclic flats, and the ranks of the two
components of $\M\setminus e$ sum to $k$. Hence
\begin{equation}\label{eq:FG-ranks}
 r_F+r_G=k.
\end{equation}
Together with \eqref{eq:H1}, this gives $F\cap G=\varnothing$.
If $e\in F\cup G$, then $F\sqcup G=E$, since $A\sqcup B=E-e$.
Both $F$ and $G$ are nonempty, so \eqref{eq:FG-ranks} would make
$(F,G)$ a $1$-separation of $\M$, contrary to connectedness. Thus
\begin{equation}\label{eq:FG-partition-minus-e}
 e\notin F\cup G,\qquad F\sqcup G=E-e.
\end{equation}

We claim that $F$ and $G$ are the only proper cyclic flats of $\M$.
Suppose, to the contrary, that $H$ is another proper cyclic flat.
By Lemma~\ref{lem:minor-presentations}, it can fail to give a proper
cyclic flat of $\M\setminus e$ only if $e\in H$ and $b_H=1$.
These conditions must hold, since distinct proper cyclic flats
contributing to the deletion give distinct sets, and $A$ and $B$
are its only proper cyclic flats. Thus $r_H=|H|-1$. Since $F$ and
$G$ partition $E-e$, equations \eqref{eq:H1} and
\eqref{eq:FG-ranks} give
\begin{align*}
 r_H
 =|H|-1
 &=|(H-e)\cap F|+|(H-e)\cap G|\\
 &\le (r_H+r_F-k)+(r_H+r_G-k)
 =2r_H-k.
\end{align*}
This implies $r_H\ge k$, contradicting the fact that $H$ is a proper
flat of the rank-$k$ matroid $\M$.

Fix any $f\in F$. We show that both $\M\setminus f$ and $\M/f$
are connected. By Lemma~\ref{lem:minor-presentations}, the proper
cyclic flats of $\M\setminus f$ are $G$, of rank $r_G$, and also
$F-f$, of rank $r_F$, if $b_F\ge2$. The proper cyclic flats of
$\M/f$ are $F-f$, of rank $r_F-1$, and $G$, of rank $r_G$;
the latter occurs because $p_G=k-r_G=r_F\ge2$, by simplicity of $\M$.
Both minors are loopless, coloopless elementary split matroids, so a
disconnected one would have two complementary proper cyclic flats.
However, every proper cyclic flat just listed avoids $e$, whereas
$e\in E-f$. Their union therefore cannot be the ground set of either
minor. This proves that $\M\setminus f$ and $\M/f$ are connected.
Interchanging $F$ and $G$ proves the same assertion for every $f\in G$.
Since $F\sqcup G=E-e$, all deletions and contractions at elements
other than $e$ are therefore connected.

We also claim that $\M/e$ is connected. Since
$p_F=r_G\ge2$ and $p_G=r_F\ge2$, its proper cyclic flats are
$F$ and $G$, with ranks $r_F$ and $r_G$, by
Lemma~\ref{lem:minor-presentations}. Their ranks sum to $k$, whereas
$\rk(\M/e)=k-1$. They cannot be the two connected components of a
disconnected elementary split matroid. Thus $\M/e$ is connected,
and $\M\setminus e$ is the only disconnected minor among all the
single-element deletions and contractions of $\M$.

It remains to consider the case in which every deletion of $\M$ is
connected. If every contraction is also connected, the assertion
holds. Otherwise, choose $e\in E$ such that
$\M/e$ is disconnected. The dual $\M^*$ is again a simple, cosimple,
connected elementary split matroid, and
$\M^*\setminus e=(\M/e)^*$ is disconnected. The preceding argument
shows that $\M^*\setminus e$ is the only disconnected single-element
deletion or contraction of $\M^*$. Since, for every $f\in E$,
$(\M^*\setminus f)^*=\M/f$ and
$(\M^*/f)^*=\M\setminus f$, invariance of connectedness under duality
shows that $\M/e$ is the only disconnected single-element deletion
or contraction of $\M$. This proves the inequality. Finally,
$|E|\ge4$, so at least one element has both minors connected.
\end{proof}

\section{An auxiliary split matroid}
\label{sec:residual}

Throughout this section, let $\M$ be a connected elementary split
matroid of rank $k$ on an $n$-element ground set $E$, where $n\ge4$,
and fix $e\in E$ such that $\M\setminus e$ and $\M/e$ are both
connected. As shown in Section~\ref{sec:minors}, we have
$2\le k\le n-2$ and \eqref{eq:local-bounds}. We construct a connected
elementary split matroid $\N_{e,v}$ using the ranks and cardinalities
prescribed by Proposition~\ref{lem:correction-recurrences}.
Its $g$-polynomial will supply the remaining term in the
deletion--contraction identity proved in Section~\ref{sec:splitting}.

\subsection{Construction of the sets \texorpdfstring{$R_F$}{RF}}
Fix $v\in E-e$ and a total order $\prec$ on $E$. We modify the
proper cyclic flats of $\M$, using this order for all choices of
elements to be removed or inserted. Write
\begin{equation}\label{eq:Nv-ground}
 E'=E-\{e,v\},\qquad K=k-1.
\end{equation}
All minima below are taken with respect to this order.
For each $F\in\mathcal Z^{\circ}(\M)$ with $e\in F$ and $b_F\ge2$,
write $s_F=r_F-1$, let $x_F=\min(F-e)$, and define
\begin{equation}\label{eq:Nv-I}
 R_F=
 \begin{cases}
  F-\{e,v\},&v\in F,\\
  F-\{e,x_F\},&v\notin F.
 \end{cases}
\end{equation}
Since $|F-e|\ge2$, the element $x_F$ is well defined. In either case,
$R_F\subseteq E'$ and $|R_F|=h_F-2$; we assign rank $s_F$ to $R_F$.

For each $F\in\mathcal Z^{\circ}(\M)$ with $e\notin F$ and $p_F\ge2$,
write $s_F=r_F$, let $y_F=\min\bigl(E-(F\cup\{e\})\bigr)$, and define
\begin{equation}\label{eq:Nv-O}
 R_F=
 \begin{cases}
  F,&v\notin F,\\
  (F-v)\cup\{y_F\},&v\in F.
 \end{cases}
\end{equation}
Since $|E-(F\cup\{e\})|=p_F+a_F-1\ge3$, the element $y_F$ is
well defined. If $v\in F$, then $y_F\notin F\cup\{e\}$, so replacing
$v$ by $y_F$ preserves the cardinality of $F$ and produces a subset of
$E'$. Thus $R_F\subseteq E'$ and $|R_F|=h_F$ in both cases; we assign
rank $s_F$ to $R_F$.
Let $\cR_{e,v}$ denote the family of pairs $(R_F,s_F)$ defined in
\eqref{eq:Nv-I} and \eqref{eq:Nv-O}.

\begin{example}\label{ex:ordered-construction}
Let $E=\{1,\ldots,10\}$ with its usual order, and consider the
matroid $\M$ of rank $5$ with proper cyclic flats
\[
 F=\{1,2,3,4,5,6\},\quad r_F=4,
 \qquad G=\{5,6,7,8\},\quad r_G=3.
\]
The hypotheses of Proposition~\ref{prop:cyclic-flat-description} hold;
in particular, $|F\cap G|=2=r_F+r_G-5$. Thus these subsets and ranks
define a loopless, coloopless elementary split matroid $\M$.
The ranks of its proper cyclic flats are $4$ and $3$, and the ranks
of their complements in $\M^*$ are $a_F=3$ and $a_G=4$.
Hence $\M$ is simple and cosimple. Moreover, $F$ and $G$ do not
partition $E$, so $\M$ is connected by the characterization of
disconnected elementary split matroids used in
Section~\ref{sec:minors}.
The matroid is neither paving nor copaving. Indeed, $G$ is cyclic
of rank $3$ and cardinality $4$, so it is a four-element circuit of
the rank-$5$ matroid $\M$. Similarly, $E-F$ is a cyclic flat of
$\M^*$ of rank $3$ and cardinality $4$, hence a four-element circuit
of the rank-$5$ matroid $\M^*$.

Choose $e=1$ and $v=7$. Lemma~\ref{lem:minor-presentations} shows
that the proper cyclic flats of both $\M\setminus1$ and $\M/1$
are $F-1$ and $G$. Their ranks are $4$ and $3$ in the deletion,
and $3$ and $3$ in the contraction. Since neither set contains $9$
or $10$, they do not partition $E-1$, so both minors are connected.
The conditions $e\in F$, $b_F=2$, $e\notin G$, and $p_G=2$ ensure
that both $F$ and $G$ contribute to $\cR_{1,7}$.
Using the usual order on $E$, we obtain $x_F=y_G=2$ and
\[
 R_F=\{3,4,5,6\},\quad s_F=3,
 \qquad R_G=\{2,5,6,8\},\quad s_G=3.
\]
Both sets have cardinality $4$ and assigned rank $3$. Since the new
ground set $E-\{1,7\}$ has cardinality $8$ and $K=4$, the rank, size,
and complement conditions of Proposition~\ref{prop:cyclic-flat-description}
hold. The intersection condition holds with equality:
$|R_F\cap R_G|=2=s_F+s_G-K$.

This example also explains why the choices of elements must be
coordinated. If we kept $x_F=2$ but replaced $v=7$ in $G$ by $3$,
the second set would be $R'_G=\{3,5,6,8\}$, still with cardinality
$4$ and assigned rank $s_G=3$. However,
\[
 |R_F\cap R'_G|=3>2=s_F+s_G-K,
\]
so the intersection condition would fail. No common total order can
give both $x_F=2$ and $y_G=3$: the first choice requires $2$ to precede
$3$, whereas the second requires $3$ to precede $2$.
The proof of Proposition~\ref{prop:N-matroid} uses the same observation
to establish the intersection bound when $e\in F$ and $e\notin G$.
\end{example}

\subsection{Existence and connectedness of \texorpdfstring{$\N_{e,v}$}{N}}

\begin{proposition}\label{prop:N-matroid}
There exists a connected elementary split matroid $\N_{e,v}$ of rank
$k-1$ on $E'=E-\{e,v\}$ whose proper cyclic flats are exactly the
sets $R_F$ occurring in $\cR_{e,v}$, with
$\rk_{\N_{e,v}}(R_F)=s_F$ for every $(R_F,s_F)\in\cR_{e,v}$.
\end{proposition}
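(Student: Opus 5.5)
The plan is to obtain $\N_{e,v}$ from Proposition~\ref{prop:cyclic-flat-description}, applied with $S=E'$, rank $K=k-1$, and the family $\cR_{e,v}$. After that, connectedness follows from the characterization of disconnected elementary split matroids recalled in Section~\ref{sec:minors}.

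\emph{Global and per-set conditions.} We have $1\le K<|E'|$ because $2\le k\le n-2$. For a pair $(R_F,s_F)$ we compute the four quantities $s_F$, $K-s_F$, $|R_F|-s_F$ and $|E'\setminus R_F|-(K-s_F)$, as in the table in the proof of Lemma~\ref{lem:minor-presentations}.
\begin{itemize}
\item If $e\in F$ and $b_F\ge2$, they are $r_F-1$, $p_F$, $b_F-1$ and $a_F$.
\item If $e\notin F$ and $p_F\ge2$, they are $r_F$, $p_F-1$, $b_F$ and $a_F-1$.
\end{itemize}
All of these are positive by \eqref{eq:positive-parameters}, \eqref{eq:local-bounds} and the defining conditions of the two cases. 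Distinctness of the sets $R_F$ will follow from the intersection bound, exactly as in the proof of Lemma~\ref{lem:minor-presentations}.

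\emph{Intersection bound.} Let $F\ne G$ contribute to $\cR_{e,v}$. I will check $|R_F\cap R_G|\le s_F+s_G-K$ in three cases.
\begin{itemize}
\item Both contain $e$. Then $R_F\subseteq F-e$ and $R_G\subseteq G-e$. Since $e\in F\cap G$, \eqref{eq:H1} gives $|R_F\cap R_G|\le|F\cap G|-1\le r_F+r_G-k-1$, which is the required bound.
\item Both avoid $e$. The target is $r_F+r_G-k+1$, so we have slack one.
  \begin{itemize}
  \item If $v$ lies in both, then $v$ is removed from both sets. The added elements $y_F,y_G$ contribute at most two new common elements, so $|R_F\cap R_G|\le|F\cap G|-1+2$.
  \item If $v$ lies in exactly one of them, at most one added element contributes, giving $|R_F\cap R_G|\le|F\cap G|+1$.
  \end{itemize}
\item The mixed case $e\in F$, $e\notin G$. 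Here the target is $r_F+r_G-k$, with no slack. This is the step the total order is designed for, as Example~\ref{ex:ordered-construction} indicates.
  \begin{itemize}
  \item If $v\notin G$, then $R_G=G$ and $R_F\subseteq F-e$, so the bound is immediate.
  \item If $v\in F\cap G$, then $v$ is removed from both sets, which pays for a possible $y_G\in R_F$.
  \item If $v\in G\setminus F$, then $R_F=F-\{e,x_F\}$ and $R_G=(G-v)\cup\{y_G\}$. The only danger is $y_G\in R_F$. In that case $y_G\in(F-e)\setminus G$, so $x_F\preceq y_G$. If $x_F=y_G$, then $y_G\notin R_F$. If $x_F\prec y_G$, minimality of $y_G$ in $E-(G\cup\{e\})$ forces $x_F\in G$. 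Removing $x_F$ then lowers the intersection by one, which compensates for $y_G$.
  \end{itemize}
\end{itemize}
This mixed case is the main obstacle. The remaining cases are bookkeeping.

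\emph{Connectedness.} If $\cR_{e,v}$ has at most one member, then $\N_{e,v}$ is connected: it is $\U_{K,|E'|}$, or it has a single proper cyclic flat, which cannot partition $E'$ together with another. Otherwise, a disconnected $\N_{e,v}$ would have exactly two proper cyclic flats $R_F\sqcup R_G=E'$ with $s_F+s_G=k-1$. We rule this out case by case.
\begin{itemize}
\item If both avoid $e$, then $r_F+r_G=k-1$. This contradicts \eqref{eq:H1}, which forces $r_F+r_G\ge k$.
\item If both contain $e$, then $r_F+r_G=k+1$, so \eqref{eq:H1} gives $F\cap G=\{e\}$. Hence $h_F+h_G-1\le n$. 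But then $|R_F|+|R_G|=h_F+h_G-4\le n-3<|E'|$, a contradiction.
\item In the mixed case, $r_F+r_G=k$, and \eqref{eq:H1} gives $F\cap G=\varnothing$. Counting, $h_F-2+h_G=n-2$, so $F\sqcup G=E$. Since the ranks sum to $k$, this is a $1$-separation of $\M$, contradicting the connectedness of $\M$.
\end{itemize}
Hence $\N_{e,v}$ is connected, and it is elementary split with the prescribed proper cyclic flats and ranks by Proposition~\ref{prop:cyclic-flat-description}.
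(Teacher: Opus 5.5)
Your proposal is correct and follows the paper's proof essentially step by step. It applies Proposition~\ref{prop:cyclic-flat-description} with the same positivity table and the same three-case intersection bound, where the common order matters only in the mixed case, and it rules out the same three complementary-pair cases for connectedness. The only difference is cosmetic: in the subcase $v\in G\setminus F$ you argue directly that $y_G\in R_F$ forces $x_F\in G$, whereas the paper reaches the same point by contradiction.
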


\begin{proof}
We first show that the family $\cR_{e,v}$ satisfies the hypotheses of
Proposition~\ref{prop:cyclic-flat-description}. Recall that
$E'=E-\{e,v\}$ and $K=k-1$. For each pair $(R_F,s_F)$, the
construction gives
\[
\begin{array}{c|ccccc}
\text{condition}&s_F&|R_F|&K-s_F&|R_F|-s_F
 &|E'\setminus R_F|-(K-s_F)\\ \hline
e\in F&r_F-1&h_F-2&p_F&b_F-1&a_F\\
e\notin F&r_F&h_F&p_F-1&b_F&a_F-1
\end{array}
\]
When $e\in F$, \eqref{eq:local-bounds} gives $r_F\ge2$, and the
construction requires $b_F\ge2$. When $e\notin F$, the construction
requires $p_F\ge2$, and \eqref{eq:local-bounds} gives $a_F\ge2$. Together with
\eqref{eq:positive-parameters}, these bounds show that every pair
satisfies $1\le s_F\le K-1$, $|R_F|\ge s_F+1$, and
$|E'\setminus R_F|\ge K-s_F+1$.

We next verify the intersection condition
\begin{equation}\label{eq:Nv-H1}
 |R_F\cap R_G|\le s_F+s_G-K
\end{equation}
for pairs in $\cR_{e,v}$ arising from distinct proper cyclic flats
$F,G$ of $\M$. If $e\in F\cap G$, then
$R_F\subseteq F-e$ and $R_G\subseteq G-e$, so \eqref{eq:H1} gives
\[
 |R_F\cap R_G|
 \le |F\cap G|-1
 \le r_F+r_G-k-1
 =s_F+s_G-K.
\]

Suppose that $e\notin F\cup G$. We claim that
$|R_F\cap R_G|\le|F\cap G|+1$. If $v$ belongs to neither $F$ nor
$G$, both sets are unchanged. If $v$ belongs to exactly one of them,
only that set acquires a new element, so the intersection increases
by at most one. If $v\in F\cap G$, removing $v$ decreases the
intersection by one, and inserting the two replacement elements
increases it by at most two. Thus, in all cases,
\[
 |R_F\cap R_G|
 \le |F\cap G|+1
 \le r_F+r_G-k+1
 =s_F+s_G-K.
\]

By symmetry, the remaining case is $e\in F$ and $e\notin G$.
Since $s_F=r_F-1$ and $s_G=r_G$, equation \eqref{eq:H1} shows that
it suffices to prove $|R_F\cap R_G|\le|F\cap G|$.
If $v\notin G$, then $R_G=G$ and $R_F\subseteq F$,
so $|R_F\cap R_G|\le|F\cap G|$. If $v\in F\cap G$, deleting $v$
reduces the intersection by one, and inserting $y_G$ increases it by
at most one. It remains to consider $v\notin F$ and $v\in G$.
If the intersection increased, we would have $x_F\notin G$ and
$y_G\in F-\{e,x_F\}$. Since $x_F\ne e$, the first condition places
$x_F$ in $E-(G\cup\{e\})$. The definition of $y_G$ as the least
element of this set gives $y_G\preceq x_F$.
The second condition and $x_F=\min(F-e)$ give $x_F\prec y_G$, a
contradiction. This proves \eqref{eq:Nv-H1} in the remaining case
and is the only part of the proof that uses a common order for the
choices of $x_F$ and $y_G$.

The intersection bound also implies that distinct original cyclic
flats give distinct sets $R_F$. Indeed, if $R_F=R_G=R$ for $F\ne G$,
then $|R|\le s_F+s_G-K\le s_F-1$, contradicting
$|R|\ge s_F+1$. Since $0<K=k-1<n-2=|E'|$, all hypotheses of
Proposition~\ref{prop:cyclic-flat-description} now hold. We obtain a
loopless, coloopless elementary split matroid $\N_{e,v}$ on $E'$ whose
proper cyclic flats are exactly the sets $R_F$, with ranks $s_F$.

It remains to prove that $\N_{e,v}$ is connected. If $\cR_{e,v}$ is
empty, then $\N_{e,v}=\U_{k-1,n-2}$, which is connected because its
rank and corank are positive. Suppose that $\cR_{e,v}$ is nonempty.
If $\N_{e,v}$ were disconnected, the characterization of
B\'erczi et al.\ \cite[Theorem~11]{BercziEtAl2023} would give two
uniform connected components. Their ground sets would be complementary
proper cyclic flats $R_F$ and $R_G$, with
\begin{equation}\label{eq:Nv-rank-sum}
 s_F+s_G=K=k-1.
\end{equation}
If $e\notin F\cup G$, then $s_F=r_F$ and $s_G=r_G$, so
$r_F+r_G=k-1$. This contradicts \eqref{eq:H1}, which would give
$|F\cap G|\le-1$. If $e\in F\cap G$, equation \eqref{eq:Nv-rank-sum}
gives $r_F+r_G=k+1$. Since $e\in F\cap G$, \eqref{eq:H1} forces
$F\cap G=\{e\}$. Since $R_F$ and $R_G$ partition $E'$, their
cardinalities give $h_F+h_G=n+2$. Consequently,
\[
 |F\cup G|=h_F+h_G-|F\cap G|=n+1,
\]
contrary to $F\cup G\subseteq E$.

In the remaining case, we may assume $e\in F$ and $e\notin G$. Equation
\eqref{eq:Nv-rank-sum} gives $r_F+r_G=k$, and \eqref{eq:H1} then implies
$F\cap G=\varnothing$. Since $R_F$ and $R_G$ are complementary in $E'$,
$(h_F-2)+h_G=n-2$, so $h_F+h_G=n$. Hence $F$ and $G$ are nonempty
sets that partition $E$ and satisfy $r_F+r_G=k$. They therefore give
a $1$-separation of $\M$, contrary to its connectedness.
Thus $\N_{e,v}$ is connected.
\end{proof}

\Needspace{4\baselineskip}
\subsection{A formula for the \texorpdfstring{$g$}{g}-polynomial of \texorpdfstring{$\N_{e,v}$}{N}}

Let $\mathcal I_e$ and $\mathcal O_e$ denote the two families of
proper cyclic flats of $\M$ that contribute to the construction:
\[
 \mathcal I_e
 =\{F\in\mathcal Z^{\circ}(\M):e\in F,\ b_F\ge2\},
 \qquad
 \mathcal O_e
 =\{F\in\mathcal Z^{\circ}(\M):e\notin F,\ p_F\ge2\}.
\]
To make the dependence of the constructed matroid on the total order
explicit, we write $\N_{e,v,\prec}$ for the matroid obtained using
the element $v$ and the total order $\prec$ on $E$.

\begin{corollary}\label{cor:N-polynomial}
Let $v\in E-e$ and let $\prec$ be a total order on $E$. The matroid
$\N_{e,v,\prec}$ constructed above has $g$-polynomial
\begin{equation}\label{eq:Nv-polynomial}
 g_{\N_{e,v,\prec}}(t)
 =g_{\U_{k-1,n-2}}(t)
 -\sum_{F\in\mathcal I_e}
   \cC_{r_F-1,k-1,h_F-2,n-2}(t)
 -\sum_{F\in\mathcal O_e}
   \cC_{r_F,k-1,h_F,n-2}(t).
\end{equation}
In particular, for fixed $\M$ and $e$, the $g$-polynomial of
$\N_{e,v,\prec}$ is independent of the choice of $v$ and of the
total order $\prec$.
\end{corollary}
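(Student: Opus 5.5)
This follows by applying Proposition~\ref{prop:FS-split} directly to $\N_{e,v,\prec}$. By Proposition~\ref{prop:N-matroid}, $\N_{e,v,\prec}$ satisfies the hypotheses of that proposition: it is a connected elementary split matroid, hence connected split. It has rank $K=k-1$ on the ground set $E'$ of $n-2$ elements. It is loopless and coloopless, since it comes from Proposition~\ref{prop:cyclic-flat-description}; moreover $|E'|=n-2\ge2$ and $1\le K<|E'|$ because $2\le k\le n-2$. Applying \eqref{eq:FS-split} to it gives
\[
 g_{\N_{e,v,\prec}}(t)=g_{\U_{k-1,n-2}}(t)-\sum_{(R,s)}\cC_{s,k-1,|R|,n-2}(t),
\]
where $(R,s)$ runs over the proper cyclic flats of $\N_{e,v,\prec}$ paired with their ranks.

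Next I reindex the sum by proper cyclic flats of $\M$. Proposition~\ref{prop:N-matroid} says the proper cyclic flats are exactly the sets $R_F$ occurring in $\cR_{e,v}$, and its proof shows that distinct $F$ give distinct $R_F$. So the map $F\mapsto R_F$ is a bijection from $\mathcal I_e\sqcup\mathcal O_e$ onto $\mathcal Z^\circ(\N_{e,v,\prec})$, and it respects the assigned ranks $s_F$. I then read off the parameters from the construction:
\begin{itemize}
\item if $F\in\mathcal I_e$, then $s_F=r_F-1$ and $|R_F|=h_F-2$, so the summand is $\cC_{r_F-1,k-1,h_F-2,n-2}$;
\item if $F\in\mathcal O_e$, then $s_F=r_F$ and $|R_F|=h_F$, so the summand is $\cC_{r_F,k-1,h_F,n-2}$.
\end{itemize}
Substituting these gives \eqref{eq:Nv-polynomial}.

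The independence statement follows because the right-hand side of \eqref{eq:Nv-polynomial} involves only $k$, $n$, the families $\mathcal I_e$ and $\mathcal O_e$, and the numbers $r_F,h_F$. All of these depend only on $\M$ and $e$, not on $v$ or $\prec$.

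The only point needing care is that the Ferroni--Schr\"oter formula requires its hypotheses exactly, in particular loopless and coloopless and at least two elements. These were already established, so no real obstacle remains.
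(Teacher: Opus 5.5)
Your proposal is correct and follows the paper's proof: you apply Proposition~\ref{prop:FS-split} to $\N_{e,v,\prec}$, reindex the sum through the bijection $F\mapsto R_F$ from $\mathcal I_e\sqcup\mathcal O_e$ onto its proper cyclic flats, read off the rank and cardinality pairs $(r_F-1,h_F-2)$ and $(r_F,h_F)$, and note that the right-hand side depends only on $\M$ and $e$. The paper also remarks explicitly that when $\mathcal I_e\sqcup\mathcal O_e$ is empty one gets $\N_{e,v,\prec}=\U_{k-1,n-2}$; your argument covers this case implicitly, since the sum is then empty.
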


\begin{proof}
By Proposition~\ref{prop:N-matroid}, the matroid $\N_{e,v,\prec}$ is
connected and elementary split, with rank $k-1$ and ground set of
cardinality $n-2$. The map $F\mapsto R_F$ is a bijection from
$\mathcal I_e\sqcup\mathcal O_e$ to
$\mathcal Z^{\circ}(\N_{e,v,\prec})$. The construction gives
\[
 \bigl(\rk_{\N_{e,v,\prec}}(R_F),|R_F|\bigr)=
 \begin{cases}
  (r_F-1,h_F-2),&F\in\mathcal I_e,\\
  (r_F,h_F),&F\in\mathcal O_e.
 \end{cases}
\]
Since $k-1$ and $n-k-1$ are positive, this connected matroid has
neither loops nor coloops. We may therefore apply
Proposition~\ref{prop:FS-split} to its $g$-polynomial. Substituting
the displayed ranks and cardinalities and separating the sum according
to $F\in\mathcal I_e$ or $F\in\mathcal O_e$ gives
\eqref{eq:Nv-polynomial}. The bijection ensures that each proper
cyclic flat contributes once, even when distinct cyclic flats have
the same rank and cardinality.

If $\mathcal I_e\sqcup\mathcal O_e$ is empty, then
$\N_{e,v,\prec}=\U_{k-1,n-2}$ and both sums in
\eqref{eq:Nv-polynomial} vanish, so the same formula applies.
For fixed $\M$ and $e$, the indexing sets $\mathcal I_e$ and
$\mathcal O_e$, as well as the parameters $r_F$ and $h_F$, are
independent of $v$ and $\prec$. Thus the right-hand side of
\eqref{eq:Nv-polynomial}, and hence the $g$-polynomial of
$\N_{e,v,\prec}$, is independent of both choices.
\end{proof}

\begin{remark}\label{rem:N-invariants}
The formulas for split matroids give a stronger independence statement.
For fixed $\M$ and $e$, every
isomorphism-invariant valuative or
covaluative invariant has the same value on all the matroids
$\N_{e,v,\prec}$. Indeed, their rank and cardinality are fixed, and
the preceding proof gives the same multiset of ranks and cardinalities
of proper cyclic flats for every choice of $v$ and $\prec$.
The formulas of Ferroni and Schr\"oter
\cite[Theorems~5.3 and~9.13]{FerroniSchroter2024} determine the value
of such an invariant from these data. In particular, the Tutte,
Kazhdan--Lusztig, and $Z$-polynomials are independent of both choices;
see Ardila and Sanchez \cite[Section~8]{ArdilaSanchez2023} for the
valuativeness of the Kazhdan--Lusztig polynomial, and
\cite[Theorem~9.3]{FerroniSchroter2024} for the latter two
polynomials.
\end{remark}
\section{The deletion--contraction identity and nonnegativity}
\label{sec:splitting}

\subsection{A deletion--contraction identity for the \texorpdfstring{$g$}{g}-polynomial}

\begin{proof}[Proof of Theorem~\ref{thm:split-recurrence}]
Let $e$ be the element in the statement, so that $\M\setminus e$
and $\M/e$ are connected. Both minors have at least three elements
and hence have neither loops nor coloops. Their ranks and coranks
therefore give $2\le k\le n-2$.
Since $\M$ is connected and split, it is elementary split. Fix
$v\in E\setminus\{e\}$ and a total order on $E$, and let $\N_{e,v}$
be the matroid of Proposition~\ref{prop:N-matroid}.
That proposition and Lemma~\ref{lem:minor-presentations} show that
the two minors and $\N_{e,v}$ are loopless, coloopless, connected
elementary split matroids.

For each proper cyclic flat $F$ of $\M$, write $r=r_F$ and $h=h_F$.
We express the $g$-polynomials of all three smaller matroids using
sums indexed by $\mathcal Z^{\circ}(\M)$. For this purpose, define
\begin{align}
D_F&=
\begin{cases}
 \cC_{r,k,h-1,n-1},&e\in F,\ b_F\ge2,\\
 0,&e\in F,\ b_F=1,\\
 \cC_{r,k,h,n-1},&e\notin F,
\end{cases}
\label{eq:deletion-correction}\\
K_F&=
\begin{cases}
 \cC_{r-1,k-1,h-1,n-1},&e\in F,\\
 \cC_{r,k-1,h,n-1},&e\notin F,\ p_F\ge2,\\
 0,&e\notin F,\ p_F=1,
\end{cases}
\label{eq:contraction-correction}\\
T_F&=
\begin{cases}
 \cC_{r-1,k-1,h-2,n-2},&e\in F,\ b_F\ge2,\\
 \cC_{r,k-1,h,n-2},&e\notin F,\ p_F\ge2,\\
 0,&\text{otherwise}.
\end{cases}
\label{eq:N-correction}
\end{align}
By Lemma~\ref{lem:minor-presentations}, $D_F$ and $K_F$ are the
contributions of $F$ to the sums in Proposition~\ref{prop:FS-split}
for $\M\setminus e$ and $\M/e$, respectively; the value is zero
when $F$ gives no proper cyclic flat of the corresponding minor.
Corollary~\ref{cor:N-polynomial} gives the same interpretation of
$T_F$ for $\N_{e,v}$. Each proper cyclic flat of a smaller matroid
arises from exactly one proper cyclic flat of $\M$, so
\begin{align*}
 g_{\M\setminus e}&=g_{\U_{k,n-1}}-\sum_F D_F,\\
 g_{\M/e}&=g_{\U_{k-1,n-1}}-\sum_F K_F,\\
 g_{\N_{e,v}}&=g_{\U_{k-1,n-2}}-\sum_F T_F.
\end{align*}
Here and below, each sum runs over $F\in\mathcal Z^{\circ}(\M)$.

We now compare the contributions of each proper cyclic flat $F$.
By \eqref{eq:local-bounds}, we have $r_F\ge2$ when $e\in F$ and
$a_F\ge2$ when $e\notin F$. Thus parts (i) and (ii) of
Proposition~\ref{lem:correction-recurrences} apply in these two
cases, respectively. The boundary identities in that proposition
cover $b_F=1$ and $p_F=1$, respectively. Thus, for every
$F\in\mathcal Z^{\circ}(\M)$,
\begin{equation}\label{eq:C-splitting}
 \cC_{r_F,k,h_F,n}=D_F+K_F+tT_F.
\end{equation}
Since $2\le k\le n-2$, Lemma~\ref{lem:uniform-recurrence} also gives
the required identity for the $g$-polynomials of the uniform matroids.
Combining it with the three expressions above and
\eqref{eq:C-splitting}, we obtain
\[
 g_{\M\setminus e}+g_{\M/e}+t\,g_{\N_{e,v}}
 =g_{\U_{k,n}}-\sum_F\cC_{r_F,k,h_F,n}
 =g_\M,
\]
where the last equality is Proposition~\ref{prop:FS-split} applied
to $\M$. This proves \eqref{eq:intro-splitting}.
\end{proof}

\begin{example}\label{ex:recurrence-computation}
Consider the matroid $\M$ of rank five on ten elements from
Example~\ref{ex:ordered-construction}, with $e=1$ and $v=7$.
Its proper cyclic flats have rank and cardinality pairs $(4,6)$ and
$(3,4)$. The corresponding pairs for $\M\setminus1$ are $(4,5)$
and $(3,4)$, whereas those for $\M/1$ are $(3,5)$ and $(3,4)$.
The two proper cyclic flats of $\N_{1,7}$ both have rank $3$ and
cardinality $4$.

The correction formulas of Section~\ref{sec:corrections} give
\[
 \cC_{4,5,6,10}(t)=\cC_{3,5,4,10}(t)=5t+8t^2+3t^3.
\]
For either minor, the sum of the two correction polynomials is
$5t+7t^2+2t^3$. Each correction polynomial for $\N_{1,7}$ is
$t+t^2$. Applying Proposition~\ref{prop:FS-split} and
\eqref{eq:intro-uniform}, and using duality for the uniform terms
of the two minors, we obtain
\begin{align*}
 g_\M(t)
 &=g_{\U_{5,10}}(t)-2(5t+8t^2+3t^3)\\
 &=60t+124t^2+84t^3+20t^4+t^5,\\
 g_{\M\setminus1}(t)=g_{\M/1}(t)
 &=g_{\U_{4,9}}(t)-(5t+7t^2+2t^3)\\
 &=30t+53t^2+28t^3+4t^4,\\
 g_{\N_{1,7}}(t)
 &=g_{\U_{4,8}}(t)-2(t+t^2)\\
 &=18t+28t^2+12t^3+t^4.
\end{align*}
These expressions give
\[
 g_\M(t)-g_{\M\setminus1}(t)-g_{\M/1}(t)
 =t^2(18+28t+12t^2+t^3)=t g_{\N_{1,7}}(t).
\]
Thus the auxiliary matroid constructed in
Example~\ref{ex:ordered-construction} realizes the remainder in
the deletion--contraction identity for this matroid.
\end{example}

\subsection{Proof of Theorem~\ref{thm:main}}

Following Oxley \cite{Oxley2011}, we write
$\operatorname{si}(\M)$ for the simplification of $\M$ and
$\operatorname{co}(\M)=(\operatorname{si}(\M^*))^*$ for its cosimplification.
The next lemma records standard consequences of connectedness and the
invariance of $g$ under series and parallel extensions, established in
\cite{Speyer2009} and extended to arbitrary matroids in
\cite[Section~9]{FinkSpeyer2012}. Compare also
\cite[Proposition~9.17(c)]{FerroniSchroter2024} for simplification.
We include a proof using the $2$-sum formula, with the rank and corank
restrictions stated explicitly.

\begin{lemma}
\label{lem:simplification}
Let $\M$ be a connected matroid.
\begin{enumerate}
\renewcommand{\labelenumi}{(\roman{enumi})}
\item If $\M$ has rank at least two, then its simplification
$\operatorname{si}(\M)$ is connected, and
$g_{\operatorname{si}(\M)}(t)=g_\M(t)$.
\item If $\M$ has corank at least two, then its cosimplification
$\operatorname{co}(\M)$ is connected, and
$g_{\operatorname{co}(\M)}(t)=g_\M(t)$.
\end{enumerate}
\end{lemma}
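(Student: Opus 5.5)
Part~(ii) follows from part~(i) by duality. I therefore concentrate on~(i): if $\M$ is connected of rank at least two, then $\operatorname{si}(\M)$ is connected and has the same $g$-polynomial.

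Since $\M$ is connected with rank at least two, it has at least two elements, so it has no loops; non-simplicity can only come from parallel classes of size at least two. I will remove redundant parallel elements one at a time and induct on $|E(\M)|-|E(\operatorname{si}(\M))|$. Suppose $f,f'$ are distinct parallel elements of $\M$. Then $\M$ is the $2$-sum $\M'\oplus_2\U_{1,3}$, where:
\begin{itemize}
\item $\M'$ is obtained from $\M\setminus f$ by relabelling $f'$ as the basepoint $p$;
\item $\U_{1,3}$ has ground set $\{f,f',p\}$ and is the parallel class of $p$.
\end{itemize}
More simply, $\M$ is the $2$-sum of $\M\setminus f$, with basepoint $f'$, and a copy of $\U_{1,3}$.

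To apply Proposition~\ref{prop:g-properties}(i), I must check the hypotheses of the $2$-sum. Both ground sets need at least three elements. For $\U_{1,3}$ this is immediate. For $\M\setminus f$, it holds because $\M$ has rank at least two: $\M\setminus f$ still has rank $\rk(\M)\ge 2$, hence at least two elements, and in fact at least three since it must be connected with rank at least two. The basepoint must be neither a loop nor a coloop in either part. This holds because $\M\setminus f$ is connected. That connectivity is the standard fact that deleting one element of a parallel pair preserves connectedness: any $1$-separation of $\M\setminus f$ extends to one of $\M$ by adding $f$ to the side containing $f'$, without changing ranks.

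The formula then gives
\[
g_\M(t)=\frac{g_{\M\setminus f}(t)\,g_{\U_{1,3}}(t)}{t}=g_{\M\setminus f}(t),
\]
since $\U_{1,3}$ is connected series-parallel, so $g_{\U_{1,3}}(t)=t$.

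Repeating this until every parallel class is a singleton yields $\operatorname{si}(\M)$. Along the way, rank, connectedness and the $g$-polynomial are all preserved. The only point requiring care is the hypotheses of the $2$-sum at each step, and this is where the rank-at-least-two assumption is used. Without it, $\M$ could be $\U_{1,n}$, whose simplification $\U_{1,1}$ is a coloop with $g=0$ rather than $t$.

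For part~(ii), apply~(i) to $\M^*$. It is connected, and its rank equals the corank of $\M$, which is at least two. Hence $\operatorname{si}(\M^*)$ is connected with $g_{\operatorname{si}(\M^*)}=g_{\M^*}=g_\M$, using Proposition~\ref{prop:g-properties}(ii). Dualizing once more, $\operatorname{co}(\M)=(\operatorname{si}(\M^*))^*$ is connected, since connectivity is invariant under duality, and has the same $g$-polynomial, again by Proposition~\ref{prop:g-properties}(ii).
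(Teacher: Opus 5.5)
Your proof is correct and follows essentially the same route as the paper: you delete one element of a parallel pair, recover $\M$ as the $2$-sum of that deletion with $\U_{1,3}$ so that $g_{\U_{1,3}}(t)=t$ cancels the division by $t$, iterate until $\operatorname{si}(\M)$ is reached, and obtain (ii) by duality. The only small differences are that you prove connectedness of $\M\setminus f$ by extending a $1$-separation (the paper uses a circuit-replacement argument), and that you check the $|E_i|\ge 3$ hypothesis of the $2$-sum explicitly, which the paper leaves implicit.
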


\begin{proof}
We first prove (i). Since $\M$ is connected and has rank at least
two, it has no loops. If $\M$ is simple, the assertion is immediate.
Otherwise, choose distinct parallel elements $e,f\in E(\M)$.
Deleting $e$ preserves the rank because $e$ is parallel to $f$.
It also preserves connectedness. To see this, let $x,y$ be distinct
elements of $E(\M)\setminus\{e\}$. Since $\M$ is connected, there
is a circuit $C$ containing $x$ and $y$. If $e\notin C$, then $C$
is a circuit of $\M\setminus e$. If $e\in C$, then $f\notin C$,
since otherwise $\{e,f\}$ would be a proper subset of $C$ that is
itself a circuit. As $e$ and $f$ are parallel, replacing $e$ by $f$
gives a circuit $(C-\{e\})\cup\{f\}$ of $\M\setminus e$ containing
$x$ and $y$. Thus $\M\setminus e$ is connected.

We next show that deleting $e$ preserves the $g$-polynomial.
Since $\M\setminus e$ is connected and has rank at least two, $f$
is neither a loop nor a coloop in $\M\setminus e$. Take a copy of
$\U_{1,3}$ whose ground set meets $E(\M)\setminus\{e\}$ only in
$f$. The $2$-sum of $\M\setminus e$ and this copy along $f$ replaces
$f$ by two parallel elements and is therefore isomorphic to $\M$.
By Proposition~\ref{prop:g-properties} and $g_{\U_{1,3}}(t)=t$,
\[
 g_\M(t)=\frac{g_{\M\setminus e}(t)g_{\U_{1,3}}(t)}{t}
       =g_{\M\setminus e}(t).
\]
Repeating this deletion until one element remains in each parallel
class yields $\operatorname{si}(\M)$. Each deletion preserves the
rank, connectedness, and the $g$-polynomial, proving (i).

For (ii), the dual matroid $\M^*$ is connected and has rank at least
two. By (i), $\operatorname{si}(\M^*)$ is connected and has the same
$g$-polynomial as $\M^*$. Taking duals gives the connected matroid
$\operatorname{co}(\M)=(\operatorname{si}(\M^*))^*$. The invariance
of the $g$-polynomial under duality gives
\[
 g_{\operatorname{co}(\M)}(t)
 =g_{\operatorname{si}(\M^*)}(t)
 =g_{\M^*}(t)
 =g_\M(t),
\]
which proves (ii).
\end{proof}

\begin{remark}
The rank and corank assumptions in Lemma~\ref{lem:simplification}
are necessary. For $n\ge2$, we have $g_{\U_{1,n}}(t)=t$, but the
simplification of $\U_{1,n}$ is $\U_{1,1}$, whose $g$-polynomial is
zero because it has a coloop. Taking duals shows that cosimplification
can likewise change the $g$-polynomial of a matroid of corank one.
\end{remark}

For the initial cases of the induction, we give an elementary proof of
the following known nonnegativity result. It follows already from
Speyer's theorem \cite{Speyer2009}, since matroids of rank or corank
at most two are realizable in characteristic zero; see also the stronger
rank-three result in \cite[Corollary~5.39]{Larson2026}.

\begin{proposition}\label{prop:small-rank}
Let $\M$ be a connected matroid of rank at most two or corank at
most two. Then the $g$-polynomial of $\M$ has nonnegative coefficients.
\end{proposition}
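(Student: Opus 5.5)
By duality (Proposition~\ref{prop:g-properties}(ii)), and since the dual of a connected matroid is connected and rank and corank are exchanged, it suffices to treat connected matroids $\M$ of rank $k\le2$. The plan is to handle small cases directly, reduce the rest to a simple matroid, and then compute its $g$-polynomial with Proposition~\ref{prop:FS-split}.

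\emph{Trivial and rank-one cases.} If $\M$ is empty, $g_\M=1$. If $|E|=1$, then $\M$ has a loop or a coloop, so $g_\M=0$. If $k=0$ and $|E|\ge 2$, every element is a loop, and a nonempty connected matroid of rank $0$ must have a single element, which we have already handled. If $k=1$ and $\M$ is connected on $n\ge2$ elements, then $\M$ is loopless, so $\M=\U_{1,n}$. This is a connected series-parallel matroid (iterated parallel extensions of $\U_{1,2}$), so $g_\M=t$.

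\emph{Rank two.} Now let $k=2$, with $\M$ connected and $n\ge3$. By Lemma~\ref{lem:simplification}(i), $\operatorname{si}(\M)$ is connected with the same $g$-polynomial, so we may assume $\M$ is simple. A simple rank-two matroid is $\U_{2,m}$, and connectedness forces $m\ge3$. Formula \eqref{eq:intro-uniform} then gives
\[
g_{\U_{2,m}}(t)=(m-2)t+\binom{m-3}{1}t^2=(m-2)t+(m-3)t^2,
\]
which has nonnegative coefficients for $m\ge3$.

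Alternatively, one can use the fact that every rank-two matroid is paving, hence split. In that approach Proposition~\ref{prop:FS-split} subtracts corrections for rank-one proper cyclic flats, that is, parallel classes. The simplification route avoids computing the cuspidal corrections $\cC_{1,2,h,n}$, so it is the one to take.

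\emph{Corank at most two, and the main obstacle.} The corank case follows by applying the above to $\M^*$, since $g_{\M^*}=g_\M$. The only delicate point is checking the hypotheses of Lemma~\ref{lem:simplification}, namely that the rank is at least two and the matroid is connected. This is exactly why the rank-one case is handled separately: there simplification would fail, as the Remark after Lemma~\ref{lem:simplification} shows.
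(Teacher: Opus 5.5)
Your proposal is correct and follows the paper's proof essentially step for step. You reduce to rank at most two by duality and dispose of loops, coloops and rank one (you get $g_{\U_{1,n}}=t$ from the series--parallel normalization, whereas the paper reads it off \eqref{eq:intro-uniform}). You then pass to $\operatorname{si}(\M)\cong\U_{2,m}$ with $m\ge3$ via Lemma~\ref{lem:simplification} and compute $(m-2)t+(m-3)t^2$; your separate empty and rank-zero cases are harmless but unnecessary, since connected matroids are nonempty by the paper's convention and a loop already forces $g_\M=0$.
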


\begin{proof}
Connectedness is preserved under duality, and
Proposition~\ref{prop:g-properties} gives $g_{\M^*}(t)=g_\M(t)$.
Since the rank of $\M^*$ is the corank of $\M$, it suffices to
consider matroids of rank at most two. If $\M$ has a loop or a
coloop, then $g_\M(t)=0$. We may therefore assume that $\M$ is
loopless and coloopless. Its ground set is nonempty by connectedness,
so its rank is positive. If $\M$ has rank one, then
$\M\cong\U_{1,n}$, where $n=|E(\M)|\ge2$. The formula
\eqref{eq:intro-uniform} gives $g_\M(t)=t$.

Suppose now that $\M$ has rank two, and let $m$ be the number of its
parallel classes. Its simplification is the uniform matroid
$\U_{2,m}$. By Lemma~\ref{lem:simplification}, this matroid is
connected and has the same $g$-polynomial as $\M$. In particular,
$m\ge3$, since $\U_{2,2}$ is disconnected. Applying
\eqref{eq:intro-uniform}, we obtain
$g_\M(t)=g_{\U_{2,m}}(t)=(m-2)t+(m-3)t^2$.
Both coefficients are nonnegative because $m\ge3$, completing the proof.
\end{proof}

\begin{proof}[Proof of Theorem~\ref{thm:main}]
The empty matroid has $g$-polynomial $1$, and every matroid with a loop
or a coloop has zero $g$-polynomial. We may therefore restrict attention
to nonempty split matroids without loops or coloops. We first prove the
assertion for connected split matroids by strong induction on the
cardinality of the ground set. Let $\M$ be such a matroid, write
$n=|E(\M)|$ and $k=\rk(\M)$, and assume that the $g$-polynomial
of every connected split matroid on fewer than $n$ elements has
nonnegative coefficients.

If $k\le2$ or $n-k\le2$, the assertion follows from
Proposition~\ref{prop:small-rank}. We may thus assume $k\ge3$ and
$n-k\ge3$. If $\M$ is not simple, its simplification
$\operatorname{si}(\M)$ has fewer than $n$ elements and is split,
since it is a minor of $\M$. Lemma~\ref{lem:simplification} shows
that $\operatorname{si}(\M)$ is connected and that
$g_\M(t)=g_{\operatorname{si}(\M)}(t)$. The induction hypothesis
therefore proves the assertion for $\M$. If $\M$ is not cosimple,
the same argument applies to $\operatorname{co}(\M)$, using the
corank assumption and part (ii) of Lemma~\ref{lem:simplification}.
It remains to consider the case in which $\M$ is simple and cosimple.

Since $\M$ is connected and split, it is elementary split.
Proposition~\ref{prop:connected-minors} gives an element $e$ for which
both $\M\setminus e$ and $\M/e$ are connected. Choose
$v\in E(\M)\setminus\{e\}$. Theorem~\ref{thm:split-recurrence}
then supplies a connected split matroid $\N_{e,v}$ satisfying
\[
 g_\M(t)=g_{\M\setminus e}(t)+g_{\M/e}(t)+t\,g_{\N_{e,v}}(t).
\]
The matroids $\M\setminus e$, $\M/e$, and $\N_{e,v}$ are connected
and split, with $n-1$, $n-1$, and $n-2$ elements, respectively.
Each therefore satisfies the induction hypothesis, which concerns all
connected split matroids with fewer than $n$ elements. Their
$g$-polynomials have nonnegative coefficients, and the displayed
identity gives the same conclusion for $g_\M(t)$.

Let $\M_1,\ldots,\M_\ell$ be the connected components of a
nonempty split matroid $\M$ without loops or coloops. Then
$\M=\M_1\oplus\cdots\oplus\M_\ell$. Since split matroids are
closed under taking minors, each $\M_j$ is a connected split matroid.
The connected case
and the multiplicativity of the $g$-polynomial under direct sums give
\[
 g_\M(t)=\prod_{j=1}^{\ell}g_{\M_j}(t)\in\mathbb Z_{\ge0}[t].
\]
This completes the proof.
\end{proof}

\subsection{The deletion--contraction identity for paving matroids}
\label{subsec:paving}
Following Wang \cite[Section~2]{Wang2026}, for integers
$2\le q\le m$, write
\[
 p_{q,m}(t):=g_{\U_{q,m}}(t)+t g_{\U_{q-1,m-1}}(t).
\]
In particular, $p_{q,q}(t)=0$, since both uniform matroids in its
definition have coloops. The correction term below is the one appearing
in Ferroni and Schr\"oter \cite[Theorem~9.18 and its proof]{FerroniSchroter2024}.
For $2\le k\le h\le n-2$, we have
\begin{equation}\label{eq:paving-correction}
 \cC_{k-1,k,h,n}(t)=g_{\U_{k,h+1}}(t)+t g_{\U_{k-1,h}}(t)=p_{k,h+1}(t).
\end{equation}
For completeness, we verify this specialization with Delannoy paths.
Write $a=n-h-1$ and $b=h-k+1$.
The upper path in the lattice path presentation of $\La_{k-1,k,h,n}$ is
$\Nstep\Estep^a\Nstep^{k-1}\Estep^b$.
Every admissible Delannoy path for this upper path begins with $a$
east steps, from $(1,1)$ to $(a+1,1)$. Removing these steps and
translating by $(-a,0)$ gives a bijection with the admissible Delannoy
paths for $\Nstep^k\Estep^b$, the upper path for $\U_{k,h+1}$.
This bijection preserves the number of diagonal steps. The formula
of Ferroni \cite[Theorem~3.4]{Ferroni2023} for the $g$-polynomial
in terms of admissible Delannoy paths therefore gives
$g_{\La_{k-1,k,h,n}}(t)=g_{\U_{k,h+1}}(t)$.
Since $n-h\ge2$, the product of uniform matroid $g$-polynomials in
\eqref{eq:correction-definition} is
$g_{\U_{k-1,h}}(t)g_{\U_{1,n-h}}(t)=t g_{\U_{k-1,h}}(t)$.
This proves \eqref{eq:paving-correction}.

Now let $\M$ be a connected paving matroid of rank $k\ge3$ on an
$n$-element ground set $E$, where $n\ge4$, and suppose that $e\in E$ is such that
both $\M\setminus e$ and $\M/e$ are connected. Since every paving
matroid is split, Theorem~\ref{thm:split-recurrence} applies.
Fix $v\in E\setminus\{e\}$ and a total order on $E$, and let
$\N_{e,v}$ be the resulting auxiliary matroid.
The proper cyclic flats of $\M$ are precisely its hyperplanes
of cardinality at least $k$. Hence $r_F=k-1$ and $p_F=1$ for every
$F\in\mathcal Z^{\circ}(\M)$. It follows that $\mathcal O_e$ is empty
and that $\mathcal I_e$ consists of the proper cyclic flats containing
$e$ with cardinality at least $k+1$.
Every proper cyclic flat also has cardinality at most $n-2$:
the unique element outside a hyperplane of size $n-1$ would be a coloop,
contrary to connectedness.
Equations~\eqref{eq:Nv-polynomial} and~\eqref{eq:paving-correction} give
\begin{equation}\label{eq:paving-remainder}
 g_{\N_{e,v}}(t)
 =g_{\U_{k-1,n-2}}(t)
 -\sum_{\substack{F\in\mathcal Z^{\circ}(\M)\\e\in F}}
   p_{k-1,|F|-1}(t).
\end{equation}
Here we have included the terms with $|F|=k$, since
$p_{k-1,k-1}(t)=0$.
Substituting \eqref{eq:paving-remainder} into
\eqref{eq:intro-splitting} gives a recurrence that agrees with
the identity in Wang \cite[Proposition~2.5]{Wang2026}:
\[
 g_\M(t)=g_{\M\setminus e}(t)+g_{\M/e}(t)
 +t\left(
 g_{\U_{k-1,n-2}}(t)
 -\sum_{\substack{F\in\mathcal Z^{\circ}(\M)\\e\in F}}
 p_{k-1,|F|-1}(t)
 \right).
\]
Moreover, after the substitution
\eqref{eq:paving-correction}, the identities
\eqref{eq:internal-recurrence} and~\eqref{eq:internal-boundary} give,
respectively, the cases $h\ge k+1$ and $h=k$ of the recurrence for
the polynomials $p_{k,h+1}(t)$ stated by Wang
\cite[Lemma~2.4, equation~(2.3)]{Wang2026}.

The construction of $\N_{e,v}$ shows that it is paving.
Indeed, it has rank $k-1$, and, since $\mathcal O_e$ is empty,
each of its proper cyclic flats has rank $r_F-1=k-2$ for some
$F\in\mathcal I_e$. If $\N_{e,v}$ had a circuit of
cardinality at most $k-2$, the closure of that circuit would be a
proper cyclic flat of rank at most $k-3$, a contradiction.
The polynomial in parentheses is therefore the
$g$-polynomial of a connected paving matroid of rank $k-1$ on
$n-2$ elements.
\section*{Acknowledgements}
The authors contributed equally, and their names are listed in
alphabetical order by family name.
Alice L.~L.~Gao was partially supported by the National Natural Science
Foundation of China (Grant No.~12671393).
Matthew H.~Y.~Xie was partially supported by the National Natural Science
Foundation of China (Grant No.~12271403).

\section*{Declaration of Generative AI}
During the preparation of this manuscript, the authors used generative AI
tools to assist in exploring possible approaches, checking technical
details, and improving the exposition. All mathematical arguments,
computations, proofs, and results were independently verified by the
authors. The authors take full responsibility for the content of the
manuscript.

\end{document}